\newcommand{\bdm}{\begin{displaymath}}
\newcommand{\edm}{\end{displaymath}}
\newcommand{\R}{\mathbb{R}}
\newcommand{\Z}{\mathbb{Z}}
\newcommand{\C}{\mathbb{C}}
\newcommand{\co}{\colon\thinspace}
\newcommand{\TC}{\mathbf{TC}}
\newcommand{\TCG}{\mathbf{TC}_G}
\newcommand{\TCK}{\mathbf{TC}_K}
\newcommand{\cO}{{\mathcal O}}
\theoremstyle{definition}
\newtheorem{defn}{Definition}[section]
\newtheorem{exam}[defn]{Example}
\newtheorem{remark}[defn]{Remark}
\theoremstyle{plain}
\newtheorem{thm}[defn]{Theorem}
\newtheorem{prop}[defn]{Proposition}
\newtheorem{lemma}[defn]{Lemma}
\newtheorem{cor}[defn]{Corollary}
\newcommand{\nil}{\mbox{\rm nil}}
\newcommand{\id}{\operatorname{id}}
\newcommand{\cat}{\mbox{\rm cat}}
\newcommand{\gcat}{\mbox{\rm cat}_G}
\newcommand{\secat}{\mbox{\rm secat}\,}
\newcommand{\gsecat}{\mbox{\rm secat}_G\,}
\author{Hellen Colman}
\author{Mark Grant}
\email{hcolman@ccc.edu}
\email{mark.grant@nottingham.ac.uk}
\address{Department of Mathematics, Wright College,
 4300 N.\ Narragansett Avenue, Chicago, IL 60634 USA}
\address{School of Mathematical Sciences, The University of Nottingham,
University Park, Nottingham, NG7 2RD, UK}
\title[Equivariant topological complexity]{Equivariant topological complexity}
\keywords{Equivariant LS-category, equivariant sectional category, equivariant topological complexity}
\subjclass[2010]{55M99, 57S10 (Primary); 55M30, 55R91 (Secondary)}
\begin{document}

\begin{abstract}
We define and study an equivariant version of Farber's topological complexity for spaces with a given compact group action. This is a special case of the equivariant sectional category of an equivariant map, also defined in this paper. The relationship of these invariants with the equivariant Lusternik-Schnirelmann category is given. Several examples and computations serve to highlight the similarities and differences with the non-equivariant case. We also indicate how the equivariant topological complexity can be used to give estimates of the non-equivariant topological complexity.
\end{abstract}

\maketitle

\section{Introduction}

The sectional category of a map $p\co E\to B$, denoted $\secat(p)$, is the minimum number of open sets needed to cover $B$, on each of which $p$ admits a homotopy section. It was first studied extensively by \v Svarc \cite{S} for fibrations (under the name genus) and later by Berstein and Ganea \cite{BG} for arbitrary maps. The notion of sectional category generalizes the classical (Lusternik-Schnirelmann) category, since $\secat(p)=\cat(B)$ whenever $E$ is contractible and $p$ is surjective. For a general overview of these and other category-type notions, we refer the reader to the survey article of James \cite{Jam} and the book of Cornea-Lupton-Oprea-Tanr\' e \cite{CLOT}.

Further to the classical applications of category to critical point theory, the concept of sectional category has been applied in a variety of settings.
We mention the work of Smale \cite{Sma} and Vassiliev \cite{V} on the complexity of algorithms for solving polynomial equations,
and applications to the theory of embeddings \cite{S}. More recently,  Farber has applied these ideas to the motion planning problem in robotics \cite{Far03,Far04}.
 He defines the {\em topological complexity} of a space $X$, denoted $\TC(X)$, to be the sectional category of the free path fibration on $X$.
The topological complexity is a numerical homotopy invariant which measures the `navigational complexity' of $X$, when viewed as the configuration space of a mechanical system. Along with various related invariants, it has enjoyed much attention in the recent literature (see \cite{GL,BGRT,G} for example).

In this paper we begin a systematic study of the equivariant versions of these notions. For simplicity, we restrict to compact group actions (although most of our results remain true for proper actions). Let $G$ be a compact Hausdorff topological group, and let $p\co E\to B$ be a $G$-map. Then the equivariant sectional category of $p$, denoted $\gsecat(p)$, is the minimum number of invariant open sets needed to cover $B$, on each of which $p$ admits a $G$-homotopy section. If $p$ is a $G$-fibration, this is equivalent to asking for a $G$-equivariant section on each open set in the cover. In the case when the actions are trivial, $\gsecat(p)$ reduces to the ordinary (non-equivariant) sectional category $\secat(p)$.

The equivariant sectional category does not seem to have appeared in the literature until now, although we note below (Corollary \ref{contractible}) that it generalizes the equivariant category, or $G$-category, in many cases of interest. This latter invariant has been extensively studied (see for example \cite{F,Mar,HC}), and gives a lower bound for the number of critical orbits of a $G$-invariant functional on a $G$-manifold. We include a review of some of its properties in Section 3 below, where we also prove product inequalities for equivariant category.
% (Theorems \ref{productI} and \ref{productII}; we remark that similar results have been obtained by Cicorta\c s \cite{Cic}).

The equivariant topological complexity of a $G$-space $X$, denoted $\TCG(X)$,  is defined in Section 5 to be the equivariant sectional category of the free path fibration $\pi\co X^I\to X\times X$, where $G$ acts diagonally on the product and in the obvious way on paths in $X$. After proving that $\TCG(X)$ is a $G$-homotopy invariant (Theorem \ref{Ghinv}), we give several inequalities relating $\TCG(X)$ to the equivariant and non-equivariant categories and topological complexities of the various fixed point sets. We also show by examples that $\TCG(X)$ can be equal to $\TC(X)$, or at the other extreme, one can be finite and the other infinite (this always happens for example if $X$ is a $G$-manifold which is connected but not $G$-connected). For a group acting on itself by left translations, we show that $\TCG(G)=\cat(G)$, so that category of Lie groups is obtained as a special case of equivariant topological complexity (Theorem \ref{catgrp}). Various other results are given, including a lower bound in terms of equivariant cohomology (Theorem \ref{eqcohom}) and an inequality which bounds the ordinary topological complexity of the fibre space with fibre $X$ associated to a numerable principal $G$-bundle by the product of $\TCG(X)$ and the topological complexity of the base space (Theorem \ref{fred}).

The invariant $\TCG(X)$ has an interpretation in terms of the motion planning problem, when $X$ is viewed as the configuration space of a mechanical system which exhibits $G$ as a group of symmetries. Namely, it is the minimum number of domains of continuity of motion planners in $X$ which preserve the symmetry. Whilst we do not pursue this viewpoint here, it is conceivable that the invariant $\TCG(X)$ may find applications in practical problems of engineering. For more background on the topological approach to motion planning, we refer the reader to \cite{Far06}.

The computation of category and topological complexity in the non-equivariant case are difficult problems which continue to inspire a great deal of research in homotopy theory, and serve to gauge the power of new topological techniques. We believe that the equivariant counterparts of these problems can fill a similar niche in equivariant homotopy theory.
%We hope that the results presented here offer a glimpse of the richness of the theory, enough to inspire future research. To this end, we have included at the end of the paper in Section 7 a small selection of problems for the reader.

\subsection*{Acknowledgements}
The work of the first author was done during a sabbatical leave granted by the City Colleges of Chicago in the spring of 2012. This paper was finalized during the visit of the first author to the University of Nottingham. She gratefully acknowledges the host institution for its hospitality and the AWM-NSF Travel Grant for its economic support. Both authors thank the anonymous referee for suggestions which greatly improved the exposition of the paper.

\section{Topological Complexity}

We begin by recalling some definitions and fixing some notation. The term {\em fibration} will always refer to a Hurewicz fibration. The sectional category of a fibration was introduced by \v{S}varc  (under the name {\em genus}) and generalized to any map by Berstein and Ganea.

%\begin{defn} A map $p\co E\to B$ has the {\em homotopy lifting property (HLP)} with respect to the space $X$ if for any homotopy $H\co X\times I\to B$ and for any map $f\co X\to E$ such that $H_0=pf$ there exists a homotopy $\widetilde H\co X\times I\to E$ with $\widetilde H_0=f$ and $p\widetilde H=H$.
%\end{defn}

%\begin{defn} A surjective map $p\co E\to B$ is a {\em Hurewicz fibration} if it has the HLP with respect to all spaces $X$.
%It is a {\em Serre fibration} if it has the HLP with respect to any CW-complex $X$.
%\end{defn}

%\begin{defn} A fibration $p\co E\to B$ is {\em locally trivial} if there is a covering of $B$ by open sets $\{ U_j\}_{j\in J}$ and homeomorphisms $\{ \psi_j\co  U_j\times F\to p^{-1}(U_j)\}_{j\in J}$ for a fixed space $F$.  \end{defn}

\begin{defn} \cite{BG} The {\em sectional category} of a map $p\co E\to B$, denoted $\secat(p)$,  is the least integer $k$ such that $B$ may be covered by $k$ open sets $\{ U_1,\ldots, U_k\}$ on each of which there exists a map $s\co U_i\to E$ such that $ps\co U_i\to B$ is homotopic to the inclusion $i_{U_i}\co  U_i\hookrightarrow B$. If no such integer exists we set $\secat(p)=\infty$.
\end{defn}

The sets $U_i\subseteq B$ in the above definition are called {\em sectional categorical} for $p$.

\begin{remark} \v{S}varc's original definition \cite{S} of the genus of a fibration $p\co E\to B$ was as the least integer $k$ such that $B$ may be covered by $k$ open sets $\{ U_1,\ldots, U_k\}$ on each of which there exists a local section of $p$, that is to say a map $s\co U_i\to E$ such that $ps=i_{U_i}$. It is easy to see (using the HLP) that this coincides with $\secat(p)$ defined above.
\end{remark}

\begin{remark} In this paper, all our category-type invariants are un-normalized (they are equal to the number of open sets in the cover). For instance, $\secat(p)=1$ if and only if $p$ admits a homotopy section. Thus our definitions exceed by one those in the book \cite{CLOT}, but are in agreement with those used in previous works of the authors, such as \cite{HC,G}.
\end{remark}

%\begin{prop}\cite{S}
%Let $p\co E\to B$ be a fibration with $B$ paracompact. Then $\secat p\le k$ if and only if the $k$-fold fibre join $E^{*(k)}$ admits a section.
%\end{prop}

Recall that the {\em (Lusternik-Schnirelmann) category} of a space $X$, denoted $\cat(X)$, is the least $k$ such that $X$ may be covered  by $k$ open sets $\{ U_1,\ldots , U_k\}$ such that each inclusion $i_{U_i}\co U_i\hookrightarrow X$ is null-homotopic. The $U_i$ are called {\em categorical} sets.

The sectional category of a surjective fibration $p\co E\to B$  is bounded above by the category of the base, $\secat(p)\le \cat(B)$, and they coincide if the space $E$ is contractible \cite{S}.

Lower bounds for the sectional category can be found using cohomology. If $I$ is an ideal in the commutative ring $R$, the {\em nilpotency} of $I$, denoted $\nil\, I$, is the maximum number of factors in a nonzero product of elements from $I$. Let $H^*$ denote cohomology with coefficients in an arbitrary commutative ring.

\begin{prop}\cite{S}
Let $p\co E\to B$ be a fibration and $p^*\co H^*(B)\to H^*(E)$ be the induced homomorphism.  Then $\secat(p)> \nil\ker p^*$.
\end{prop}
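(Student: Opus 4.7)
The plan is to run \v{S}varc's classical argument: turn a covering of $B$ by sectional categorical sets into a factorization of cup products of $\ker p^{*}$ through a relative cohomology group that vanishes.

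First I would set $k = \secat(p)$ and choose a cover $\{U_{1},\ldots,U_{k}\}$ of $B$ by sectional categorical sets, with homotopy sections $s_{i}\colon U_{i}\to E$ satisfying $ps_{i} \simeq i_{U_{i}}$. The first key observation is that for any class $u \in \ker p^{*} \subseteq H^{*}(B)$, its restriction to each $U_{i}$ vanishes:
\[
i_{U_{i}}^{*}(u) = (ps_{i})^{*}(u) = s_{i}^{*}p^{*}(u) = 0.
\]
By the long exact sequence of the pair $(B, U_{i})$, this means $u$ lifts to a class $\tilde u_{i} \in H^{*}(B, U_{i})$ mapping to $u$ under the restriction $H^{*}(B, U_{i}) \to H^{*}(B)$.

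Next I would exploit the multiplicative structure of the relative cup product. Given any $k$ classes $u_{1},\ldots,u_{k} \in \ker p^{*}$, choose lifts $\tilde u_{i} \in H^{*}(B, U_{i})$ as above. The relative cup product gives an element
\[
\tilde u_{1} \smile \cdots \smile \tilde u_{k} \in H^{*}(B, U_{1}\cup\cdots\cup U_{k}) = H^{*}(B, B) = 0,
\]
whose image under $H^{*}(B, U_{1}\cup\cdots\cup U_{k}) \to H^{*}(B)$ is precisely $u_{1}\smile\cdots\smile u_{k}$. Hence every $k$-fold product of elements of $\ker p^{*}$ vanishes, i.e.\ $\nil\ker p^{*} < k = \secat(p)$, which is the inequality claimed.

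The only genuinely subtle point is the multiplicativity property of the relative cup product used above, namely that classes in $H^{*}(B, U_{i})$ for different $i$ can be multiplied into $H^{*}(B, \bigcup U_{i})$ in a way compatible with restriction to absolute cohomology; this is standard but is the one place where care is required. Everything else is routine once the covering is chosen and the lifts produced.
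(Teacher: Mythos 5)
Your proof is correct and is precisely the classical argument for this bound: restrict a class of $\ker p^{*}$ to each sectional categorical set using the homotopy section, lift to $H^{*}(B,U_{i})$ via the long exact sequence, and multiply the lifts into $H^{*}(B,U_{1}\cup\cdots\cup U_{k})=H^{*}(B,B)=0$ by the relative cup product. The paper itself offers no proof, deferring to \v{S}varc, and your argument is essentially the same standard one found in that source and in the usual references, so there is nothing to add beyond the care you already flag about the relative cup product for open sets.
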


%A similar bound holds with $H^*$ replaced by an arbitrary multiplicative cohomology theory.

The topological complexity of a space $X$, denoted $\TC(X)$, is a homotopy invariant defined by  Farber \cite{Far03} in order to study the motion planning problem in robotics. We recall now some of its important properties. For more detail we refer the reader to the original papers of Farber \cite{Far03,Far04,Far06}. For any space $X$, let $X^I$ denote the space of paths in $X$ endowed with the compact-open topology. The {\em free path fibration} is the map $\pi\co X^I\to X\times X$ given by $\pi(\gamma)=(\gamma(0), \gamma(1))$. It is surjective if $X$ is path-connected.

\begin{defn} The {\em topological complexity} of a space $X$ is
\[
\TC(X)=\secat(\pi),
\]
the sectional category of the free path fibration $\pi\co X^I\to X\times X$.
\end{defn}

\begin{prop}
If $X$ dominates $Y$, then $\TC(X)\geq \TC(Y)$. In particular, if $X\simeq Y$ then $\TC(X)=\TC(Y)$.
\end{prop}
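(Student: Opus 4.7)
The plan is to show that if $X$ dominates $Y$ via maps $g\co Y\to X$ and $f\co X\to Y$ with $fg\simeq \id_Y$, then any sectional categorical cover of $X\times X$ for $\pi_X\co X^I\to X\times X$ can be transported to a sectional categorical cover of $Y\times Y$ for $\pi_Y\co Y^I\to Y\times Y$ of the same cardinality, so that $\TC(Y)\le \TC(X)$. The second assertion follows because a homotopy equivalence gives mutual domination.

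Suppose $\TC(X)=k$, and let $\{U_1,\ldots,U_k\}$ be an open cover of $X\times X$ with (homotopy) sections $s_i\co U_i\to X^I$ of $\pi_X$. Because $\pi_X$ is a fibration, the Remark allows me to assume each $s_i$ is a genuine local section, i.e.\ $\pi_X\circ s_i=i_{U_i}$. Now I define the open sets on $Y\times Y$ by pulling back along $g\times g$:
\[
V_i=(g\times g)^{-1}(U_i),\qquad i=1,\ldots,k,
\]
which cover $Y\times Y$ since the $U_i$ cover $X\times X$. The next step is to construct, on each $V_i$, a map $\sigma_i\co V_i\to Y^I$ with $\pi_Y\circ\sigma_i\simeq i_{V_i}$.

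Fix a homotopy $H\co Y\times I\to Y$ with $H(y,0)=y$ and $H(y,1)=f(g(y))$. For $(y_1,y_2)\in V_i$, I define $\sigma_i(y_1,y_2)\co I\to Y$ as the concatenation of three pieces: first the path $t\mapsto H(y_1,3t)$ on $[0,1/3]$, going from $y_1$ to $f(g(y_1))$; then $t\mapsto f\bigl(s_i(g(y_1),g(y_2))(3t-1)\bigr)$ on $[1/3,2/3]$, going from $f(g(y_1))$ to $f(g(y_2))$ since $s_i(g(y_1),g(y_2))$ is a path in $X$ from $g(y_1)$ to $g(y_2)$; and finally $t\mapsto H(y_2,3-3t)$ on $[2/3,1]$, going from $f(g(y_2))$ to $y_2$. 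Continuity in $(y_1,y_2)$ follows from continuity of $H$, $f$, $g$, and $s_i$, together with the standard fact that path concatenation is continuous in the compact-open topology. By construction $\pi_Y\circ\sigma_i=i_{V_i}$, so $\sigma_i$ is a genuine local section of $\pi_Y$ on $V_i$.

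Thus $\{V_1,\ldots,V_k\}$ is a sectional categorical cover for $\pi_Y$, giving $\TC(Y)\le k=\TC(X)$. The main point requiring care is the passage from homotopy sections to honest sections (handled via the HLP-based observation recorded in the earlier Remark), and the bookkeeping of the concatenation to ensure the endpoints of $\sigma_i(y_1,y_2)$ really are $y_1$ and $y_2$ rather than $fg(y_1)$ and $fg(y_2)$; once the homotopy $H$ is inserted at both ends this is automatic. The homotopy invariance statement $\TC(X)=\TC(Y)$ for $X\simeq Y$ is then immediate by applying the domination inequality both ways.
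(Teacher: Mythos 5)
Your argument is correct: pulling the cover back along $g\times g$ and building honest local sections of $\pi_Y$ by the concatenation $H(y_1,-)\,\cdot\,f\circ s_i(g(y_1),g(y_2))\,\cdot\,\overline{H(y_2,-)}$ does give $\pi_Y\sigma_i=i_{V_i}$, and the reduction to genuine sections via the HLP remark is legitimate since $\pi_X$ is a fibration. Note, however, that the paper does not prove this proposition itself (it is recalled from Farber); the closest in-paper argument is the proof of the equivariant analogue, Theorem \ref{Ghinv}, and that proof takes a genuinely different route from yours. There, with $\phi\psi\simeq_G\id_Y$, the section is transported formally: one sets $V=(\psi\times\psi)^{-1}U$ and $\sigma=\widetilde\phi\circ s\circ\overline{(\psi\times\psi)}$, and verifies $\pi_Y\sigma=(\phi\times\phi)\pi_X s\,\overline{(\psi\times\psi)}\simeq(\phi\times\phi)(\psi\times\psi)i_V\simeq i_V$ directly at the level of homotopy sections, with no path concatenation, no use of the HLP to rigidify sections, and no endpoint bookkeeping. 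That abstract argument is shorter and transfers verbatim to the equivariant setting (and indeed to the sectional category of other fibrations), whereas your construction is more explicit and closer to Farber's original motion-planning picture: it exhibits concrete motion planners on $Y$ from those on $X$, at the modest cost of checking continuity of the concatenation and invoking the fibration property of $\pi_X$ to replace homotopy sections by genuine ones. Your construction would also equivariantize (the homotopy $H$ would then need to be a $G$-homotopy), but the paper's chain-of-homotopies argument avoids even that consideration.
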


\begin{prop} For a path-connected space $X$,
$
\cat(X)\le\TC(X)\le\cat(X\times X).
$
\end{prop}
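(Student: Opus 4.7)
The plan is to establish both inequalities by directly comparing open covers.

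For the upper bound $\TC(X) \le \cat(X \times X)$, I would show that every categorical open set in $X \times X$ is sectional categorical for $\pi$. Pick a point $x_0 \in X$ and let $U \subseteq X \times X$ be categorical. Define $s \co U \to X^I$ to be the constant map sending each point to the constant path $c_{x_0}$ at $x_0$. Then $\pi \circ s$ is the constant map at $(x_0, x_0)$, and since $i_U$ is null-homotopic and $X \times X$ is path-connected, $i_U$ is homotopic to this constant map. Hence $U$ is sectional categorical for $\pi$, and any categorical cover of $X \times X$ is also a sectional categorical cover for $\pi$.

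For the lower bound $\cat(X) \le \TC(X)$, I would take a sectional categorical cover $\{U_1, \ldots, U_k\}$ of $X \times X$ for $\pi$, with sections $s_i \co U_i \to X^I$ and homotopies $H_i \co U_i \times I \to X \times X$ from $\pi s_i$ to $i_{U_i}$. Fix $x_0 \in X$ and set $V_i = \{x \in X : (x, x_0) \in U_i\}$; these are open and cover $X$ since the $U_i$ cover $X \times X$. To show each $V_i$ is categorical, I would define $F \co V_i \times I \to X$ by $F(x, t) = s_i(x, x_0)(t)$, which is a homotopy from $F_0(x) = s_i(x, x_0)(0)$ to $F_1(x) = s_i(x, x_0)(1)$ inside $X$. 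Restricting $H_i$ along the embedding $x \mapsto (x, x_0)$ of $V_i$ and projecting onto the two factors of $X \times X$ yields homotopies $F_0 \simeq i_{V_i}$ and $F_1 \simeq c_{x_0}$ in $X$. Concatenating gives $i_{V_i} \simeq F_0 \simeq F_1 \simeq c_{x_0}$, so $V_i$ is categorical.

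The main subtlety lies in the lower bound, where one must juggle three homotopies living in $X$ and $X \times X$ and piece them together correctly; no deeper tool than the definitions is needed. Alternatively, one can observe that pulling $\pi$ back along the inclusion $X \times \{x_0\} \hookrightarrow X \times X$ yields a fibration whose total space, the space of paths in $X$ ending at $x_0$, is contractible, so its sectional category equals $\cat(X)$; since sectional category can only decrease under pullback, this delivers $\cat(X) \le \TC(X)$.
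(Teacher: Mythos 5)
Your argument is correct. Note that the paper itself states this proposition without proof, simply citing Farber, so there is no in-text argument to compare against; but both halves of your proof are sound: for the upper bound, a categorical set $U\subseteq X\times X$ has null-homotopic inclusion, and path-connectedness lets you slide the constant map to $(x_0,x_0)=\pi s$ for your constant-path section $s$; for the lower bound, the sets $V_i=\{x:(x,x_0)\in U_i\}$ do cover $X$, and the concatenation $i_{V_i}\simeq F_0\simeq F_1\simeq c_{x_0}$ obtained from $s_i$ and the projections of $H_i$ restricted along $x\mapsto(x,x_0)$ is exactly what is needed. Your alternative argument for the lower bound --- pulling $\pi$ back along $X\times\{x_0\}\hookrightarrow X\times X$ to get the based path fibration with contractible total space, then using monotonicity of sectional category under pullbacks --- is in fact the route this paper generalizes equivariantly in Proposition \ref{lower}, via Propositions \ref{Gcontract} and \ref{pullback}; similarly, your direct proof of the upper bound is the non-equivariant shadow of Proposition \ref{upper}, where the fixed-point/surjectivity subtleties that force extra hypotheses in the equivariant case disappear. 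So the explicit cover-juggling version is the more elementary self-contained argument, while the pullback version is the one that scales to the equivariant setting studied here.
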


\begin{prop} If $X$ is path-connected and paracompact then
$
\TC(X)\le 2\dim X +1,
$
where $\dim$ denotes the covering dimension.
\end{prop}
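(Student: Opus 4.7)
The plan is to chain together the previous proposition with two well-known classical results: the dimensional upper bound for Lusternik--Schnirelmann category, and the product inequality for covering dimension.

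First I would invoke the previous proposition to reduce the statement to a bound on $\cat(X\times X)$; namely, $\TC(X)\le\cat(X\times X)$. This moves the problem from the free path fibration to ordinary category, where stronger dimensional techniques are available. Second, I would apply the classical theorem of Ganea (see \cite{CLOT}): for any path-connected paracompact space $Y$ one has $\cat(Y)\le \dim Y+1$. To apply it here we need $X\times X$ to be path-connected and paracompact, which follows from the hypotheses on $X$ (path-connectedness passes to products automatically, and paracompactness will pass to the product under the mild regularity assumed in this setting).

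Third, I would invoke the standard inequality from dimension theory, $\dim(X\times X)\le 2\dim X$, valid for the class of spaces under consideration. Combining all three ingredients gives the chain
\[
\TC(X)\le\cat(X\times X)\le\dim(X\times X)+1\le 2\dim X +1,
\]
which is the desired inequality.

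The only delicate point is the third step: the product formula for covering dimension can fail in full generality for arbitrary topological spaces, so one must be a little careful about the ambient hypotheses. However, for paracompact spaces (in particular for metrizable or CW-type examples, which are the cases of practical interest), both $\dim(X\times X)\le 2\dim X$ and the paracompactness of $X\times X$ are standard, so no genuine difficulty arises. The rest is an immediate concatenation of previously cited results.
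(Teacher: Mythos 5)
Your chain $\TC(X)\le\cat(X\times X)\le\dim(X\times X)+1\le 2\dim X+1$ is exactly the standard argument (the paper itself gives no proof of this proposition, quoting it from Farber \cite{Far03}, and Farber's proof is this route), so the strategy is not in question. The genuine problems are in the hypotheses under which you invoke the two classical ingredients. First, the bound $\cat(Y)\le\dim Y+1$ is \emph{not} a theorem about arbitrary path-connected paracompact spaces: one needs some local hypothesis (locally contractible, ANR, CW) guaranteeing that $Y$ has at least one categorical open cover which can then be refined to a cover of order $\dim Y+1$. The Hawaiian earring $H$ is compact, metrizable, path-connected and one-dimensional, yet every open neighbourhood of the wild point contains an entire circle $C_n$, and $C_n$ is a retract of $H$, so no such neighbourhood is contractible in $H$; hence $\cat(H)=\TC(H)=\infty$. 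This shows your citation of the cat--dimension bound is too generous, and also that the proposition itself, read literally with only ``path-connected and paracompact'', carries implicit niceness assumptions inherited from Farber's setting -- so your proof cannot go through at this level of generality, because the statement doesn't either.

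Second, your parenthetical claims that paracompactness of $X$ passes to $X\times X$ and that $\dim(X\times X)\le 2\dim X$ is ``standard'' for paracompact spaces are false as stated: the Sorgenfrey line is paracompact but its square is not even normal, and the product inequality for covering dimension is known only under additional hypotheses (metrizable, compact, CW, or Morita/Katetov-type conditions), with counterexamples outside such classes. In the intended setting -- say $X$ a metrizable locally contractible space, an ANR, or a CW complex -- all three steps are correct, standard, and give the proof you describe (cf.\ \cite{CLOT} for the category--dimension bound); but to make the write-up honest you must either add those hypotheses explicitly or verify the point-set facts you are using rather than asserting them for bare paracompact spaces.
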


\begin{defn} Let $\Bbbk$ be a field. Then cup product defines a homomorphism of rings
\[
\xymatrix{
H^*(X;\Bbbk)\otimes_\Bbbk H^*(X;\Bbbk)\ar[r]^-{\cup} & H^*(X;\Bbbk).
}
\]
The {\em ideal of zero-divisors} $\mathcal{Z}_\Bbbk\subseteq H^*(X;\Bbbk)\otimes_\Bbbk H^*(X;\Bbbk)$ is the kernel of this homomorphism.
\end{defn}
\begin{prop}[Cohomological lower bound] $\TC(X)>\nil \,\mathcal{Z}_\Bbbk$ for any field $\Bbbk$.
\end{prop}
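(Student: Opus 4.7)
The plan is to reduce the statement to the earlier sectional-category cohomological lower bound ($\secat(p)>\nil\ker p^*$), by identifying $\ker\pi^*$ with the ideal of zero-divisors $\mathcal{Z}_\Bbbk$.

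First I would observe that the path space $X^I$ deformation retracts onto $X$ via the inclusion $c\co X\to X^I$ sending $x$ to the constant path at $x$ (a homotopy inverse is given by evaluation at $0$, and the deformation is $\gamma\mapsto\gamma_s$ where $\gamma_s(t)=\gamma(st)$). Under this homotopy equivalence, the composition $\pi\circ c\co X\to X\times X$ is exactly the diagonal map $\Delta(x)=(x,x)$. Consequently the induced homomorphism $\pi^*\co H^*(X\times X;\Bbbk)\to H^*(X^I;\Bbbk)$ can be identified, via the isomorphism $c^*\co H^*(X^I;\Bbbk)\xrightarrow{\cong}H^*(X;\Bbbk)$, with the homomorphism $\Delta^*\co H^*(X\times X;\Bbbk)\to H^*(X;\Bbbk)$.

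Next I would use the Künneth theorem: since $\Bbbk$ is a field, the cross-product induces a ring isomorphism $H^*(X;\Bbbk)\otimes_\Bbbk H^*(X;\Bbbk)\cong H^*(X\times X;\Bbbk)$. Under this identification one has $\Delta^*(a\times b)=a\cup b$, so $\Delta^*$ corresponds exactly to the cup-product homomorphism $\cup$ of the previous definition. Therefore
\[
\ker\pi^*\;\cong\;\ker\Delta^*\;\cong\;\ker\cup\;=\;\mathcal{Z}_\Bbbk.
\]

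Finally, applying the earlier proposition (\v{S}varc's cohomological lower bound for sectional category) to the fibration $\pi$ gives
\[
\TC(X)\;=\;\secat(\pi)\;>\;\nil\ker\pi^*\;=\;\nil\,\mathcal{Z}_\Bbbk,
\]
which is the desired inequality. The only subtle point is the identification of $\pi^*$ with $\Delta^*$; once one notes that $c\co X\to X^I$ is a homotopy equivalence with $\pi\circ c=\Delta$, the rest is a direct application of Künneth and the previous proposition, so I expect no substantial obstacle.
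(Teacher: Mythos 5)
Your proof is correct and is exactly the intended derivation: the paper states this result as a recalled fact from Farber, and the standard argument is precisely yours, namely identifying $\ker\pi^*$ with $\ker\Delta^*$ via the homotopy equivalence $c\co X\to X^I$ with $\pi c=\Delta$, using the K\"unneth map over the field $\Bbbk$ to match $\Delta^*$ with the cup product, and then applying \v{S}varc's bound $\secat(\pi)>\nil\ker\pi^*$. (Only cosmetic caveat: without finite-type hypotheses the K\"unneth map need only be injective rather than an isomorphism, but injectivity already gives $\nil\,\mathcal{Z}_\Bbbk\le\nil\ker\pi^*$, which is all you need.)
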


\begin{exam}\label{TCspheres}
The topological complexity of the standard $n$-sphere  is
\[
\TC(S^n) = \left\{ \begin{array}{ll} \infty & (n=0) \\
                                        2 & (n\ge 1\mbox{ odd}) \\
                                     3 & (n\ge 2\mbox{ even}). \end{array} \right.
\]
\end{exam}

\section{Equivariant category}

In this section we recall some definitions and results related to the equivariant (Lusternik-Schnirelmann) category of a $G$-space. We also prove a product inequality for a diagonal action with fixed points, and state the analogous inequality for product actions.

{\em For the remainder of the paper, $G$ will denote a compact Hausdorff topological group acting continuously on a Hausdorff space $X$ on the left}. In this case, we say that $X$ is a {\em $G$-space}. For each $x\in X$ the {\em isotropy group} $G_x=\{h\in G\mid  hx=x\}$ is a closed subgroup of $G$. The set $Gx=\{gx\mid g\in G\}\subseteq X$ is called the {\em orbit} of $x$, and also denoted $\cO (x)$. There is a homeomorphism from the coset space $G/G_x$ to $Gx$, which sends $gG_x$ to $gx$ for each $g\in G$.

The {\em orbit space} $X/G$ is the set of equivalence classes determined by the action, endowed with the quotient topology. Since $G$ is compact and $X$ is Hausdorff, $X/G$ is also Hausdorff, and the {\em orbit map} $p\co X\to X/G$ sending a point to its orbit is both open and closed \cite[Chapter I.3]{tD}.

 If $H$ is a closed subgroup of $G$, then $X^H=\{x\in X|\; hx=x \mbox{ for all }h\in H\}$ is called the {\em $H$-fixed point set} of $X$.

Let $X$ and $Y$ be $G$-spaces.
Two $G$-maps $\phi, \psi\co  X\to Y$ are {\em $G$-homotopic}, written $\phi\simeq_G \psi$, if there is a $G$-map $F\co X \times I \to Y$ with $F_0=\phi$ and $F_1=\psi$, where $G$ acts trivially on $I$ and diagonally on $X\times I$.

 We now begin to discuss the equivariant category of a $G$-space $X$, as studied for instance in \cite{HC,F,Mar}. An open set $U\subseteq X$ is described as {\em invariant} if $gU\subseteq U$ for all $g\in G$.

 \begin{defn} An invariant set $U$ in a $G$-space $X$ is called {\em $G$-categorical} if the inclusion $i_U\co U\to X$ is $G$-homotopic to a map with values in a single orbit.
\end{defn}
\begin{defn} The {\em equivariant category} of a $G$-space $X$, denoted $\gcat(X)$, is the least integer $k$ such that $X$ may be covered by $k$ open sets $\{ U_1,\ldots , U_k\}$, each of which is $G$-categorical.
\end{defn}

\begin{defn} A $G$-space $X$ is said to be {\em $G$-contractible} if $\gcat(X)=1$.
\end{defn}

%\begin{prop} If $X$ and $Y$ are $G$-homotopy equivalent, then $\gcat(X)=\gcat(Y)$.
%\end{prop}
\begin{exam}\label{circle} Let $G=S^1$ acting freely on $X=S^1$ by rotations. Since the action is transitive we have $\gcat(X)=1$, whilst $\cat(X)=2$. Note that $X$ is $G$-contractible but not contractible.
\end{exam}

\begin{prop}[\cite{F,Mar}]\label{X/G}
When $X$ is a free metrizable $G$-space we have
\[
\gcat(X)=\cat(X/G),
\]
the non-equivariant category of the orbit space. In general, $\gcat(X)\geq\cat(X/G)$.
\end{prop}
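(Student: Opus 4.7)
The plan is to prove the two inequalities separately, with the general direction $\gcat(X) \geq \cat(X/G)$ requiring only the open-quotient property, while the reverse direction for free metrizable $X$ relies on the orbit map being a $G$-fibration.

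\textbf{General direction: $\gcat(X) \geq \cat(X/G)$.} Take a $G$-categorical cover $\{U_1,\ldots,U_k\}$ of $X$. For each $i$, let $V_i = p(U_i) \subseteq X/G$; these sets are open (since $p\co X\to X/G$ is open) and invariant, and they cover $X/G$. For each $U_i$ there is a $G$-homotopy $F_i\co U_i \times I \to X$ from $i_{U_i}$ to a map with image in a single orbit $Gx_i$. Because $F_i$ is $G$-equivariant, the composition $p\circ F_i\co U_i\times I\to X/G$ is constant on $G$-orbits in $U_i\times I$ (with trivial action on $I$), hence descends to a continuous map $\bar F_i\co V_i\times I \to X/G$. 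At $t=0$ this gives the inclusion $i_{V_i}$, and at $t=1$ its image lies in the single point $p(Gx_i) = \{[x_i]\}$. Thus each $V_i$ is categorical in $X/G$, so $\cat(X/G)\le \gcat(X)$.

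\textbf{Free metrizable case: $\gcat(X) \leq \cat(X/G)$.} For this direction I would invoke the fact that, when $G$ is a compact Hausdorff group acting freely on a metrizable space $X$, the orbit map $p\co X\to X/G$ is a numerable principal $G$-bundle and, in particular, a $G$-fibration with the base $X/G$ carrying the trivial action. Given a categorical cover $\{V_1,\ldots,V_k\}$ of $X/G$, set $U_i = p^{-1}(V_i)$; these are invariant open sets covering $X$. For each $i$, let $h_i\co V_i\times I\to X/G$ be a homotopy from $i_{V_i}$ to a constant map $c_{q_i}$. Pulling back along $p$, define
\[
H_i\co U_i\times I\to X/G,\qquad H_i(x,t)=h_i(p(x),t).
\]
Since the target carries the trivial action, $H_i$ is automatically $G$-equivariant, and $H_i(-,0) = p\circ i_{U_i}$.

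\textbf{Main step: applying the equivariant homotopy lifting property.} Because $p$ is a $G$-fibration, I can lift $H_i$ to a $G$-homotopy $\widetilde H_i\co U_i\times I \to X$ with $\widetilde H_i(-,0)=i_{U_i}$ and $p\circ\widetilde H_i = H_i$. Then $\widetilde H_i(-,1)$ takes values in the single fibre $p^{-1}(q_i)$, which for a free action is exactly one orbit. Hence $U_i$ is $G$-categorical, proving $\gcat(X)\le\cat(X/G)$.

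The main obstacle here is the equivariant lifting step: it relies on $p$ being a genuine $G$-fibration, which in turn uses the freeness of the action, the compactness of $G$, and the metrizability (or at least paracompactness) of $X$ to guarantee numerability of the principal bundle. If any of these hypotheses is dropped the lift may fail to exist $G$-equivariantly, which is exactly why equality cannot be asserted in the general case and only the inequality $\gcat(X)\geq\cat(X/G)$ survives.
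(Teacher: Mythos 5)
The paper offers no proof of this proposition --- it is quoted from Fadell and Marzantowicz --- so your argument can only be measured against the standard one in those sources, and it matches it. Your first direction is fine: since the $U_i$ are invariant and $p$ is an open map, $p\times\id_I$ is an open surjection onto $V_i\times I$, hence a quotient map, so the equivariant deformation of $U_i$ into an orbit genuinely descends to a null-homotopy of $i_{V_i}$ in $X/G$; this gives $\cat(X/G)\le\gcat(X)$ with no extra hypotheses. Your second direction is also the intended argument: pull back a categorical cover of $X/G$, lift the contracting homotopies equivariantly, and note that each fibre $p^{-1}(q_i)$ is a single orbit (which, incidentally, is true for any action, not just free ones --- freeness is used only to make $p$ a bundle). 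The one point to be careful about is the justification of the key input ``free $+$ compact Hausdorff $+$ metrizable $\Rightarrow$ numerable principal $G$-bundle, hence $G$-fibration'': local triviality of free actions is Gleason's slice theorem and requires $G$ to be a compact \emph{Lie} group, which is precisely the setting of the cited references \cite{F,Mar}; for a general compact Hausdorff group it is not automatic that the orbit map of a free action admits local sections. So your proof is correct under the hypotheses of the sources the paper cites (metrizability then supplying paracompactness, hence numerability and the covering homotopy theorem), but you should state that the Lie condition, or at least the assumption that $p\co X\to X/G$ is a numerable principal bundle, is what licenses the equivariant lifting step, rather than attributing it to compactness and metrizability alone.
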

The equivariant category of a $G$-space is independent from the category of the space, as the following family of examples illustrates.

\begin{exam} For $n\ge 1$, let $G=S^1\subset\C$ act on the unit sphere $S^{2n-1}\subset\C^{n}$ by complex multiplication in each co-ordinate. Then $\gcat(S^{2n-1}) = \cat(\C P^{n-1}) = n$, whilst $\cat(S^{2n-1})=2$.
\end{exam}

Just like its non-equivariant counterpart, the $G$-category finds applications in critical point theory.

\begin{thm}[\cite{F,Mar}] \label{crit}
Let $M$ be a compact $G$-manifold, and let $f\co M\to \R$ be a smooth $G$-invariant function on $M$. Then $f$ has at least $\gcat(M)$ critical orbits.
\end{thm}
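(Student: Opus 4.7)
The plan is to adapt the classical Lusternik--Schnirelmann argument to the equivariant setting by means of an equivariant gradient flow. First I would average an auxiliary Riemannian metric on $M$ over $G$ using Haar measure to obtain a $G$-invariant metric. Since $f$ is $G$-invariant, the associated gradient vector field $\nabla f$ is then $G$-equivariant, and so is its flow $\phi_t$. In particular, the critical set of $f$ is $G$-invariant, and we may assume that it consists of finitely many critical orbits $O_1,\ldots,O_k$ (otherwise the inequality holds trivially).

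Next, for each critical orbit $O_j$ I would invoke the equivariant tubular neighborhood theorem (slice theorem for smooth compact group actions) to produce a $G$-invariant open neighborhood $W_j$ which $G$-equivariantly deformation retracts onto $O_j$. Since $O_j$ is a single orbit, the inclusion $W_j\hookrightarrow M$ is then $G$-homotopic to a map with values in $O_j$, so $W_j$ is $G$-categorical. Shrinking if necessary, I may assume the $W_j$'s corresponding to orbits at the same critical level are pairwise disjoint.

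The core technical input is an equivariant deformation lemma. If $[a,b]\subset\R$ contains no critical values of $f$, then a suitable reparametrization of the gradient flow furnishes a $G$-equivariant deformation retraction of $M^b:=f^{-1}(-\infty,b]$ onto $M^a$; and if $c\in(a,b)$ is the unique critical value in $[a,b]$, then $M^b$ can be $G$-equivariantly deformed into $M^a\cup\bigcup_{O_j\subset f^{-1}(c)}W_j$. Starting from $M=M^{N}$ for $N$ larger than all critical values and iterating downward past each critical value, I concatenate the intermediate $G$-homotopies into a single $G$-homotopy $H\co M\times I\to M$ with $H_0=\id_M$ and $H_1(M)\subset\bigcup_{j=1}^k W_j$.

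Finally, setting $U_j:=H_1^{-1}(W_j)$ produces $k$ invariant open sets covering $M$. Each is $G$-categorical, since the inclusion $U_j\hookrightarrow M$ equals $H_0|_{U_j}$ and is thus $G$-homotopic via $H$ to the composite $U_j\xrightarrow{H_1}W_j\hookrightarrow M$, which in turn is $G$-homotopic to a map into the single orbit $O_j$ by $G$-categoricity of $W_j$. Hence $\gcat(M)\le k$, proving the theorem. The main obstacle I anticipate is establishing the equivariant deformation lemma with sufficient care in a neighborhood of each critical orbit; here the slice theorem does the heavy lifting, since it lets one model a $G$-neighborhood of $O_j$ as a linear normal bundle over the orbit and control the gradient flow uniformly in the normal directions.
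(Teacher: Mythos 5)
The paper gives no proof of this statement---it is quoted from Fadell and Marzantowicz---so your argument must stand on its own, and it contains one genuine gap. Everything up to and including the deformation lemma is fine: averaging the metric over Haar measure, equivariance of the gradient flow, the slice theorem making each invariant tubular neighbourhood $W_j$ $G$-categorical, and the local statement that $M^b$ deforms $G$-equivariantly into $M^a\cup\bigcup W_j$ when $c$ is the only critical value in $[a,b]$. The gap is the concatenation step: you cannot, in general, splice these local deformations into a single $G$-homotopy $H$ with $H_0=\id_M$ and $H_1(M)\subseteq\bigcup_{j=1}^k W_j$. The later deformations are controlled only on the sublevel sets; nothing keeps the points already deposited in the neighbourhoods of the higher critical orbits inside $\bigcup W_j$ (a point of $W_j$ off the critical orbit is swept down by the flow and at the final time may lie far from every $W_j$), and declaring the homotopy stationary on those points is not compatible with continuity on the overlaps. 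In fact no such $H$ need exist at all: already with $G$ trivial, take the standard height function on the torus $T^2$, whose four critical points admit small pairwise disjoint contractible neighbourhoods; a map homotopic to the identity with image in their union would force the identity of $H_1(T^2;\Z)\cong\Z^2$ to factor through the trivial group. So the final step, from which your cover $\{U_j=H_1^{-1}(W_j)\}$ is produced, collapses.

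The standard repair, and essentially how the cited proofs (and the classical Lusternik--Schnirelmann argument) proceed, is to avoid any global homotopy and work instead with the relative invariant $\mathrm{cat}_G(A;M)$, the least number of open subsets of $M$, each $G$-categorical in $M$, needed to cover $A\subseteq M$. This invariant is monotone under $G$-deformation of $A$ within $M$ and subadditive on unions, so your deformation lemma yields $\mathrm{cat}_G(M^b;M)\le \mathrm{cat}_G(M^a;M)+n_c$, where $n_c$ is the number of critical orbits at level $c$ and their disjoint neighbourhoods $W_j$ are $G$-categorical by your slice-theorem argument. Inducting downward over the finitely many critical values gives $\gcat(M)=\mathrm{cat}_G(M;M)\le\sum_c n_c=k$. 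With that substitution your outline becomes a correct proof; the remaining preparation (invariant metric, equivariant flow, reduction to finitely many critical orbits) is exactly what is needed.
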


\begin{exam} \label{U(n)}

Let $U(n)$ denote the compact Lie group of $n\times n$-unitary matrices. Then $U(n)$ acts smoothly on itself by conjugation, $A\cdot B = ABA^{-1}$. We can apply Theorem \ref{crit} to obtain an upper bound for $\cat_{U(n)}(U(n))$, as follows.

By diagonalization, two unitary matrices are conjugate if and only if they have the same set of eigenvalues (all of which lie on the unit circle in $\C$). Thus we can define an invariant functional
\[
f\co U(n)\to \R,\qquad f(A) = \sum_{i=1}^n |\lambda_i - 1|,
\]
where $\{\lambda_1, \ldots , \lambda_n\}$ is the set of eigenvalues of $A$. It is easy to see that $f$ is smooth, and that the critical orbits are the conjugacy classes of the matrices $\operatorname{diag}(-1,\ldots , -1,1,\ldots, 1)$. There are precisely $n+1$ such orbits. We therefore have $\cat_{U(n)}(U(n))\le n+1$ by Theorem \ref{crit}.

In Example \ref{conjugation} below we will see that $\cat_{U(n)}(U(n))= n+1$, by relating it to the notion of equivariant topological complexity. \qed
\end{exam}

 We now give an equivariant version of the product inequality for category. Our treatment is based on \cite[Theorem 1.37]{CLOT}, which in turn is based on that of Fox \cite[Theorem 9]{Fox}. In particular, our proof relies on a notion of categorical sequence.

\begin{defn} A {\em $G$-categorical sequence} in $X$ of length $k$ is a nested sequence $A_0\subseteq A_1\subseteq\cdots\subseteq A_k=X$ with $A_0=\emptyset$ and the property that each difference $A_i-A_{i-1}$ is invariant and is contained in some $G$-categorical open set $U_i$. (Note that we do not require the $A_i$ to themselves be invariant.)
\end{defn}
\begin{lemma} A $G$-space $X$ has a $G$-categorical sequence of length $k$ if and only if $\gcat(X)\leq k$. \qed
\end{lemma}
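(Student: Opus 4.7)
The plan is to prove both directions by direct construction, passing between a $G$-categorical cover and a $G$-categorical sequence in the obvious way; no obstacles are anticipated, as this is really just a bookkeeping lemma. Throughout, I will use that arbitrary unions and differences of invariant subsets of $X$ are again invariant.

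For the ``if'' direction, suppose $\gcat(X)\le k$, and choose an invariant open cover $\{U_1,\ldots,U_k\}$ of $X$ by $G$-categorical sets. Set $A_0=\emptyset$ and, for $1\le i\le k$, define
\[
A_i=U_1\cup\cdots\cup U_i.
\]
Then $A_0\subseteq A_1\subseteq\cdots\subseteq A_k=X$, and each difference satisfies $A_i-A_{i-1}=U_i\setminus(U_1\cup\cdots\cup U_{i-1})\subseteq U_i$. Since $U_i$ and $U_1\cup\cdots\cup U_{i-1}$ are invariant, so is $A_i-A_{i-1}$. Hence $\{A_i\}$ is a $G$-categorical sequence of length $k$.

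For the ``only if'' direction, suppose $\emptyset=A_0\subseteq A_1\subseteq\cdots\subseteq A_k=X$ is a $G$-categorical sequence, and choose $G$-categorical open sets $U_i\supseteq A_i-A_{i-1}$ witnessing the definition. Since $X=\bigcup_{i=1}^k (A_i-A_{i-1})\subseteq\bigcup_{i=1}^k U_i$, the $U_i$ form an invariant open cover of $X$ by $G$-categorical sets, so $\gcat(X)\le k$.

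The only subtle point to record is that invariance is preserved under the set-theoretic operations used (finite union and set difference), which is immediate from the definition of an invariant set. With that observation in hand, the equivalence is established by the two constructions above.
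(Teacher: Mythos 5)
Your proof is correct, and it is precisely the routine bookkeeping argument the paper has in mind when it states the lemma without proof: one direction takes partial unions $A_i=U_1\cup\cdots\cup U_i$ of a $G$-categorical cover, the other reads off the cover $\{U_i\}$ from the differences $A_i-A_{i-1}$, with invariance preserved under the finite unions and differences involved. Nothing further is needed.
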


Just as in the non-equivariant case, we also need some separation and connectedness conditions. Recall that a space $X$ is called {\em completely normal} if whenever $A,B\subseteq X$ such that $\overline{A}\cap B = \emptyset = A\cap\overline{B}$, then $A$ and $B$ have disjoint open neighbourhoods in $X$. For example, metric spaces and CW-complexes are completely normal.

\begin{defn}
A $G$-space $X$ is called {\em $G$-completely normal} if whenever $A,B\subseteq X$ are invariant sets such that $\overline{A}\cap B = \emptyset = A\cap\overline{B}$, then $A$ and $B$ have disjoint open invariant neighbourhoods in $X$.
\end{defn}

\begin{lemma}
If $X$ is a completely normal $G$-space, then $X$ is $G$-completely normal.
\end{lemma}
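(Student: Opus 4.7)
The plan is to apply complete normality of $X$ to get disjoint open separators for $A$ and $B$, then shrink them to their largest invariant open subsets, using compactness of $G$ to ensure these subsets are still open.

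First I would invoke complete normality of $X$ on the invariant sets $A, B$ (which satisfy the same hypothesis $\overline{A}\cap B = \emptyset = A\cap\overline{B}$ in $X$) to obtain disjoint open sets $U, V \subseteq X$ with $A\subseteq U$ and $B\subseteq V$. These need not be invariant, so the second step is to define
\[
U^* = \{x\in X \mid Gx \subseteq U\}, \qquad V^* = \{x\in X \mid Gx\subseteq V\}.
\]
By construction $U^*$ is invariant (if $Gx\subseteq U$ and $h\in G$, then $G(hx) = Gx\subseteq U$), and similarly $V^*$ is invariant. They are disjoint since $U^*\subseteq U$ and $V^*\subseteq V$. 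Moreover, because $A$ is invariant, every $a\in A$ satisfies $Ga\subseteq A\subseteq U$, so $A\subseteq U^*$, and similarly $B\subseteq V^*$.

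The heart of the argument is showing $U^*$ is open; this is where the compactness of $G$ enters. Consider the action map $\mu\co G\times X\to X$, $\mu(g,x) = gx$, which is continuous. Then $\mu^{-1}(U)$ is open in $G\times X$, and $x\in U^*$ if and only if $G\times\{x\}\subseteq \mu^{-1}(U)$. Given such an $x$, since $G$ is compact, the tube lemma produces an open neighbourhood $W$ of $x$ in $X$ with $G\times W\subseteq \mu^{-1}(U)$, i.e.\ $GW\subseteq U$, which means $W\subseteq U^*$. Hence $U^*$ is open, and by the same argument so is $V^*$. This completes the verification that $A$ and $B$ admit disjoint invariant open neighbourhoods.

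The main obstacle is really just the openness of $U^*$; everything else is formal from invariance of $A$ and $B$. The tube lemma application is the only place one uses that $G$ is compact, which is consistent with the paper's standing assumption. No issues with Hausdorffness or closedness of orbits are needed, since we are separating already-given invariant sets rather than orbits.
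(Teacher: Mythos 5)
Your proof is correct, but it takes a different route from the paper. The paper factors through the orbit space: it quotes the fact that complete normality of $X$ together with compactness of $G$ implies complete normality of $X/G$, and then reduces the statement to showing that complete normality of $X/G$ yields $G$-complete normality of $X$ via the orbit map $p\co X\to X/G$ (an exercise left to the reader). You instead work entirely inside $X$: separate the invariant separated sets $A,B$ by ordinary complete normality, then pass to the invariant cores $U^*=\{x\mid Gx\subseteq U\}$ and $V^*$, proving their openness with the tube lemma. This is a clean, self-contained argument that supplies all the details the paper omits; note that your $U^*$ equals $X\setminus p^{-1}\bigl(p(X\setminus U)\bigr)$, so your tube-lemma step is in effect re-deriving the closedness of the orbit map that the quotient-space route would invoke. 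Both arguments use compactness of $G$ at exactly one point (you for the tube lemma, the paper for the behaviour of $X/G$ and of $p$); yours has the advantage of avoiding the quotient space and the quoted ``well known'' fact altogether, while the paper's route packages the statement as a consequence of properties of $X/G$ that are useful elsewhere (cf.\ Proposition \ref{X/G}).
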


\begin{proof}
It is well known that if $X$ is completely normal and $G$ is compact, then $X/G$ is completely normal. Thus it suffices to prove that complete normality of $X/G$ implies $G$-complete normality of $X$. This is an exercise in general topology, using the orbit map $p\co X\to X/G$, and is left to the reader.
\end{proof}

\begin{defn}
A $G$-space $X$ is said to be {\em $G$-connected} if
the $H$-fixed point set $X^H$ is path-connected for every closed subgroup $H$ of
$G$.
\end{defn}

\begin{lemma}[Conservation of isotropy]\label{4}
Let $X$ be a $G$-connected $G$-space, and let $x, y \in X$ such that $G_x\subseteq G_y$. Then there exists a $G$-homotopy $F\co \cO(x)\times I\to X$ such that $F_0=i_{\cO(x)}$ and $F_1(\cO(x))\subset \cO(y)$.
\end{lemma}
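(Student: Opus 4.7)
The plan is to use the fact that $y$ lies in the $G_x$-fixed set and exploit $G$-connectedness to obtain a path from $x$ to $y$ that is fixed by $G_x$, then spread that path around the orbit $\mathcal{O}(x)$ equivariantly.

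First I would observe that $y \in X^{G_x}$: indeed $x \in X^{G_x}$ trivially, and any $h \in G_x \subseteq G_y$ fixes $y$. Since $X$ is $G$-connected, $X^{G_x}$ is path-connected, so choose a path $\alpha \co I \to X^{G_x}$ with $\alpha(0) = x$ and $\alpha(1) = y$. The key property is that $G_x \subseteq G_{\alpha(t)}$ for every $t \in I$.

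Next, define a candidate $G$-homotopy $F \co \mathcal{O}(x) \times I \to X$ by the formula $F(gx, t) = g\alpha(t)$. The main things to verify are well-definedness, equivariance, and continuity. Well-definedness follows because if $gx = g'x$ then $g^{-1}g' \in G_x \subseteq G_{\alpha(t)}$, so $g\alpha(t) = g'\alpha(t)$. Equivariance is immediate from $F(h \cdot gx, t) = hg\alpha(t) = h \cdot F(gx, t)$, and the endpoint conditions $F_0 = i_{\mathcal{O}(x)}$ and $F_1(\mathcal{O}(x)) \subset \mathcal{O}(y)$ follow from $\alpha(0) = x$ and $\alpha(1) = y$.

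The step I expect to be the only genuine technical point is continuity of $F$. The natural continuous map $\widetilde{F} \co G \times I \to X$ given by $(g,t) \mapsto g\alpha(t)$ factors through $F$ via the canonical map $G \times I \to \mathcal{O}(x) \times I$, $(g,t) \mapsto (gx, t)$. Since $G$ is compact Hausdorff, the orbit map $G \to \mathcal{O}(x) \cong G/G_x$ is a quotient map, and since $I$ is locally compact Hausdorff, taking the product with $\operatorname{id}_I$ preserves the quotient property. Hence $G \times I \to \mathcal{O}(x) \times I$ is a quotient map, so continuity of $\widetilde{F}$ forces continuity of $F$. This completes the construction.
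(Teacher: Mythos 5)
Your proof is correct and follows essentially the same route as the paper: take a path $\alpha$ in $X^{G_x}$ from $x$ to $y$ (using $y\in X^{G_x}$ and $G$-connectedness) and spread it over the orbit by $F(gx,t)=g\alpha(t)$, checking well-definedness and equivariance exactly as the paper does via $G_x\subseteq G_{\alpha(t)}$. Your explicit continuity argument, using that $G\to G/G_x\cong\cO(x)$ is a quotient map and that crossing with the locally compact interval $I$ preserves quotients, is a welcome detail that the paper leaves implicit.
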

\begin{proof}
Let $H=G_x$. Then $x, y\in X^H$ since $G_{x}\subseteq G_{y}$. Consider a path $\alpha\co I\to X^H$ joining $x$ and $y$. Then $H\subseteq G_{\alpha(t)}$ for all $t\in I$. Define a homotopy $F\co  {G/{G_x}}\times I \to X$ given by $F(gG_x, t)=g\alpha(t)$. We have that $F$ is well defined, is equivariant and is a homotopy of the inclusion into the orbit $\cO(y)$.
\end{proof}

\begin{thm}\label{productI}
  Let $X$ and $Y$ be $G$-connected $G$-spaces such that $X\times Y$ is completely normal. If $X^G\neq \emptyset$ or $Y^G\neq\emptyset$, then
\[
\gcat(X\times Y)\leq\gcat(X)+\gcat(Y)-1,
\]
where $X\times Y$ is given the diagonal $G$-action.
\end{thm}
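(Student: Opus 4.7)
The plan is to adapt Fox's argument (see \cite[Theorem 1.37]{CLOT}) to the equivariant setting, constructing a $G$-categorical sequence of length $m + n - 1$ in $X \times Y$ from those of $X$ and $Y$. Set $m = \gcat(X)$ and $n = \gcat(Y)$, and without loss of generality fix $y^* \in Y^G$. Choose $G$-categorical sequences $\emptyset = A_0 \subseteq \cdots \subseteq A_m = X$ and $\emptyset = B_0 \subseteq \cdots \subseteq B_n = Y$ whose invariant differences $D_i := A_i - A_{i-1}$ and $E_j := B_j - B_{j-1}$ are contained in $G$-categorical open sets $U_i \subseteq X$ and $V_j \subseteq Y$, respectively. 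In $X \times Y$ with the diagonal action, set
\begin{displaymath}
C_k := \bigcup_{i + j \leq k + 1} D_i \times E_j \qquad (k = 0, \ldots, m + n - 1),
\end{displaymath}
so that $C_0 = \emptyset$, $C_{m+n-1} = X \times Y$, and $C_k \setminus C_{k-1}$ is the disjoint union $\bigsqcup_{i+j = k+1} D_i \times E_j$ of invariant sets.

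The central claim is that each product $U_i \times V_j$ is $G$-categorical in $X \times Y$ under the diagonal action. Let $\phi_i\co U_i \to \cO(x_i)$ and $\psi_j\co V_j \to \cO(y_j)$ be the $G$-maps witnessing $G$-categoricity. Since $G_{y_j} \subseteq G = G_{y^*}$, the $G$-connectedness of $Y$ together with Lemma \ref{4} furnishes a $G$-homotopy from $i_{\cO(y_j)}$ to the constant map at $y^*$; concatenating with the $G$-homotopy from $i_{V_j}$ to $\psi_j$, we may replace $\psi_j$ by the constant map at $y^*$. Running the $G$-homotopies for $U_i$ and $V_j$ in parallel, with diagonal equivariance on both sides, deforms the inclusion $i_{U_i \times V_j}$ to the map $(u, v) \mapsto (\phi_i(u), y^*)$, whose image is $\cO(x_i) \times \{y^*\}$; because $y^*$ is $G$-fixed, this set is precisely the single diagonal orbit $\cO(x_i, y^*)$.

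To assemble a $G$-categorical sequence in $X \times Y$, apply $G$-complete normality (which follows from complete normality of $X \times Y$ via the preceding lemma) to the pairwise disjoint invariant sets $D_i \times E_j \subseteq U_i \times V_j$ with $i + j = k + 1$: this produces pairwise disjoint invariant open neighbourhoods $W_{ij}$ with $D_i \times E_j \subseteq W_{ij} \subseteq U_i \times V_j$. Each $W_{ij}$ inherits $G$-categoricity from $U_i \times V_j$, with target orbit $\cO(x_i, y^*)$. A further application of Lemma \ref{4} inside $X \times Y$ consolidates the distinct target orbits $\cO(x_i, y^*)$ arising in a given anti-diagonal into a single common target orbit, so that $W_k := \bigsqcup_{i+j = k+1} W_{ij}$ is itself $G$-categorical. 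The chain $C_0 \subseteq \cdots \subseteq C_{m+n-1}$ with ambient $G$-categorical sets $W_k$ is a $G$-categorical sequence of length $m + n - 1$ in $X \times Y$, and the preceding lemma delivers the desired inequality.

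The principal obstacle is this consolidation: in the non-equivariant Fox argument, the analogous step reduces, by path-connectedness, to joining finitely many points by paths, but here one must $G$-deform a finite family of orbits of possibly differing isotropy into a single orbit in $X \times Y$. This is where the interplay of the fixed-point hypothesis, $G$-connectedness, and the conservation of isotropy (Lemma \ref{4}) becomes essential; without $X^G \neq \emptyset$ or $Y^G \neq \emptyset$ one can exhibit concrete examples in which the product inequality fails.
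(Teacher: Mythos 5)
Your construction is, up to interchanging the roles of $X$ and $Y$, the one in the paper: the same filtration $C_k$ (the paper writes it as $\bigcup_{r}A_r\times B_{k+1-r}$, which coincides with your $\bigcup_{i+j\le k+1}D_i\times E_j$), the same use of the fixed point together with Lemma \ref{4} to make each product $U_i\times V_j$ $G$-categorical for the diagonal action, and the same appeal to $G$-complete normality. The paper then omits the verification that $\{C_k\}$ is a $G$-categorical sequence, saying only that it proceeds by analogy with \cite[Theorem 1.37]{CLOT}; your write-up tries to make that verification explicit, and it is exactly there that it has a gap.

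The problematic step is the consolidation. You assert that ``a further application of Lemma \ref{4}'' deforms $\bigsqcup_{i+j=k+1}W_{ij}$, whose pieces land in the orbits $\cO(x_i,y^*)$, into a single orbit. But Lemma \ref{4} only deforms $\cO(p)$ into $\cO(q)$ when $G_p\subseteq G_q$. Here $G_{(x_i,y^*)}=G_{x_i}\cap G_{y^*}=G_{x_i}$, so a common target $\cO(a,b)$ would require $G_{x_i}\subseteq G_a\cap G_b$ for every $i$ occurring in the anti-diagonal, i.e.\ a point of $X$ (and one of $Y$) fixed by the subgroup generated by all the relevant $G_{x_i}$. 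The hypotheses do not provide such a point: only one factor is assumed to have a $G$-fixed point, and $G$-connectedness in the paper's sense makes each $X^H$ path-connected but possibly empty, so when the isotropy groups $G_{x_i}$ along an anti-diagonal are not jointly subconjugate to a single isotropy group there is no orbit into which all the targets can be pushed, and your set $W_k$ need not be $G$-categorical. (If both $X^G$ and $Y^G$ were nonempty the step would be immediate, since everything could be deformed to the fixed point $(x_0,y^*)$, whose isotropy is all of $G$.) You correctly identify this merging of orbits of differing isotropy as the principal obstacle, but then you assume rather than prove that Lemma \ref{4} overcomes it; in the non-equivariant argument the analogous step needs only path-connectedness, which is why the analogy does not transfer for free. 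A smaller point: $G$-complete normality applies to invariant sets $A,B$ with $\overline{A}\cap B=\emptyset=A\cap\overline{B}$, so you must also check that the pieces $D_i\times E_j$ along an anti-diagonal are separated, not merely disjoint, before extracting the disjoint invariant neighbourhoods $W_{ij}$.
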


\begin{proof}
Suppose that $\gcat(X)=n$ with $G$-categorical sequence $\{A_0,A_1,\ldots , A_n\}$ and $\gcat(Y)=m$ with $G$-categorical sequence $\{B_0,B_1,\ldots , B_m\}$. Denote by $U_i\subseteq X$ the open $G$-categorical set containing $A_i-A_{i-1}$, and by $W_j\subseteq Y$ the open $G$-categorical set containing $B_j-B_{j-1}$. Suppose for concreteness that $X^G\neq\emptyset$. By Lemma \ref{4} we may assume that the inclusions $i_{U_i}\co U_i\to X$ are all $G$-homotopic into $\cO (x_0) = \{x_0\}$, where $x_0\in X^G$ is some fixed point. Each inclusion $i_{W_j}\co W_j\to Y$ is $G$-homotopic into $\cO(y_j)$ for some $y_j\in Y$. It follows that $U_i\times W_j\subseteq X\times Y$ is $G$-homotopic into $\cO(x_0,y_j) = \{x_0\}\times\cO(y_j)$, and hence these products are all $G$-categorical in $X\times Y$.

Define subsets of $X\times Y$ by
\[
C_0=\emptyset,\qquad C_k = \bigcup_{r=1}^{k} A_r\times B_{k+1-r}\quad(1\leq k\leq n+m-1),
\]
 where we set $A_i=\emptyset$ for $i>n$ and $B_j=\emptyset$ for $j>m$. We claim that $\{C_0,\ldots, C_{n+m-1}\}$ is a $G$-categorical sequence for $X\times Y$.

 The proof of this claim proceeds by analogy with the non-equivariant case \cite[Theorem 1.37]{CLOT}, using the $G$-complete normality of $X\times Y$. Therefore we omit the details.
\end{proof}

We remark that a similar result (with a similar proof) was given by Cicorta\c s \cite[Proposition 3.2]{Cic}. There the assumption on the existence of fixed points was omitted, however, leading to counter-examples. For example, let $G=S^1$ acting on $X=Y=S^1$ by rotations as in Example \ref{circle}. Then $\gcat(X\times Y) = \cat((S^1\times S^1)/S^1) = \cat(S^1) = 2$, whilst $\gcat(X) + \gcat(Y) - 1 = 1$. The problem is that an orbit of the diagonal action is not necessarily a product of orbits.

The hypothesis on fixed point sets can be dropped when considering more general product actions. Let $K$ be another compact Hausdorff group. Then the product of a $G$-space $X$ and a $K$-space $Y$ becomes a $G\times K$-space in an obvious way. The orbits of this action are the products of orbits, and one easily obtains the following result.

\begin{thm}\label{productII}
Let $X$ be a path-connected $G$-space and $Y$ be a path-connected $K$-space, such that $X\times Y$ is completely normal. Then
\[
\cat_{G\times K}(X\times Y)\leq \gcat(X) + \cat_K(Y)-1.
\]
\end{thm}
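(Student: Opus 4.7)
The plan is to run the argument of Theorem~\ref{productI} while exploiting the key simplification that, under the product action of $G\times K$ on $X\times Y$, every orbit is already a product: $\cO_{G\times K}(x,y)=\cO_G(x)\times\cO_K(y)$. This removes the need for a common fixed point, because the product of a $G$-homotopy into $\cO_G(x_0)$ with a $K$-homotopy into $\cO_K(y_0)$ automatically takes values in the single $(G\times K)$-orbit $\cO_{G\times K}(x_0,y_0)$.

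Set $n=\gcat(X)$ and $m=\cat_K(Y)$, and pick a $G$-categorical sequence $\emptyset=A_0\subseteq A_1\subseteq\cdots\subseteq A_n=X$ with $A_i-A_{i-1}\subseteq U_i$ (open $G$-categorical) together with an analogous $K$-categorical sequence $\emptyset=B_0\subseteq\cdots\subseteq B_m=Y$ with $B_j-B_{j-1}\subseteq W_j$ (open $K$-categorical). The observation above exhibits each product $U_i\times W_j$ as $(G\times K)$-invariant and $(G\times K)$-categorical in $X\times Y$. Following the template of Theorem~\ref{productI}, I then form
\[
C_0=\emptyset,\qquad C_k=\bigcup_{r=1}^{k}A_r\times B_{k+1-r}\quad(1\leq k\leq n+m-1),
\]
with the conventions $A_i=\emptyset$ for $i>n$ and $B_j=\emptyset$ for $j>m$. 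A short combinatorial check gives $C_{n+m-1}=X\times Y$, and each difference splits as a disjoint union of $(G\times K)$-invariant products
\[
C_k-C_{k-1}=\bigsqcup_{r=1}^{k}(A_r-A_{r-1})\times(B_{k+1-r}-B_{k-r}),
\]
with the $r$-th summand lying in the $(G\times K)$-categorical set $U_r\times W_{k+1-r}$.

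The main obstacle is the same step left implicit in Theorem~\ref{productI}: producing, for each $k$, a single $(G\times K)$-categorical open set of $X\times Y$ containing the entire difference $C_k-C_{k-1}$. I would handle this exactly as in the Fox-style proof of \cite[Theorem~1.37]{CLOT}: the complete normality of $X\times Y$ upgrades to $(G\times K)$-complete normality (by the lemma on $G$-complete normality applied to the compact group $G\times K$), and this is used to enclose the disjoint invariant pieces of $C_k-C_{k-1}$ in pairwise disjoint open invariant neighbourhoods contained in $U_r\times W_{k+1-r}$. Granting this step, $\{C_0,\ldots,C_{n+m-1}\}$ is a $(G\times K)$-categorical sequence of length $n+m-1$, yielding $\cat_{G\times K}(X\times Y)\leq \gcat(X)+\cat_K(Y)-1$.
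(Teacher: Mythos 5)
You follow exactly the route the paper has in mind (product orbits plus the categorical-sequence argument of Theorem~\ref{productI}), but the step you explicitly ``grant'' is not a routine repetition of the Fox/CLOT argument --- it is a genuine gap. After enclosing the pieces of $C_k-C_{k-1}$ in pairwise disjoint invariant open sets $N_r\subseteq U_r\times W_{k+1-r}$, all you get is that the inclusion of $\bigcup_r N_r$ is $(G\times K)$-homotopic to a map whose image lies in the \emph{union} of the orbits $\cO(x_r)\times\cO(y_{k+1-r})$, one orbit for each $r$. For $\bigcup_r N_r$ to be $(G\times K)$-categorical you must still deform these distinct orbits equivariantly into a \emph{single} orbit. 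Non-equivariantly this is precisely where path-connectedness is used in \cite[Theorem 1.37]{CLOT}: the various constant values are dragged along paths to a common point. Equivariantly, moving one orbit into another requires isotropy-compatible paths inside fixed point sets --- this is the content of Lemma~\ref{4}, which needs the relevant fixed point sets to be path-connected and an inclusion of isotropy groups --- and mere path-connectedness of $X$ and $Y$ does not supply it. This is the same difficulty that forces the hypotheses ``$G$-connected and $X^G\neq\emptyset$'' in Theorem~\ref{productI}; the product action removes the need for a fixed point to make each $U_i\times W_j$ categorical, but it does not remove the merging problem.

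Moreover, under the stated hypotheses the granted step cannot be repaired, because the inequality itself can fail for merely path-connected factors. Take $G$ trivial, $X=S^1$, and $K=\Z_2$ acting on $Y=S^1$ by complex conjugation; then $\gcat(X)=\cat(S^1)=2$ and $\cat_K(Y)=2$, so the claimed bound is $3$. The resulting involution $(x,y)\mapsto(x,\bar y)$ on the torus has fixed set a disjoint union of two circles $C_+$ and $C_-$. Any $(G\times K)$-categorical open set meets at most one of $C_+,C_-$: an equivariant homotopy keeps a fixed point inside the component of the fixed set where it starts, while the terminal image is a single orbit, which can contain fixed points of only one circle. Furthermore, the intersection of such a set with the circle it meets is null-homotopic in that circle, so at least $\cat(S^1)=2$ sets of the cover meet $C_+$ and at least two further sets meet $C_-$, giving $\cat_{G\times K}(X\times Y)\geq 4>3$. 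So the merging step genuinely fails; to make your argument (which is faithful to the paper's one-line sketch) work, one needs stronger hypotheses in the spirit of Theorem~\ref{productI} --- for instance $G$-connectedness of $X$ and $K$-connectedness of $Y$ together with a condition (such as a fixed point in one factor) guaranteeing a common target orbit for the pieces in each layer --- rather than path-connectedness alone.
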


\section{Equivariant sectional category}

%\begin{defn} A space $X$ is a {\em $G$-ANR} if for any normal $G$-space $B$ and any invariant subspace $A\subset B$, a $G$-map $A\to X$ can be extended over an invariant neighbourhood of $A$ in $B$.
%\end{defn}
%If $G$ is a compact Lie group acting on a compact manifold $M$, then $M$ is a $G$-ANR.

%\begin{prop}\cite{} If $f\co  X\to Y$ is a $G$-homotopy equivalence, then $f^H \co  X^H \to Y^H$ is a homotopy equivalence  for all closed subgroups $H$ of $G$.
%\end{prop}

%\begin{prop}\cite{J} Let $G$ be a compact Lie group and let $f\co  X\to Y$ be a $G$-map between $G$-ANR's. If $f^H \co  X^H \to Y^H$ is a homotopy equivalence  for all closed subgroups $H$ of $G$ then $f\co  X\to Y$ is a $G$-homotopy equivalence.
%\end{prop}

In this section we generalize the notion of sectional category to the equivariant setting.

\begin{defn} The {\em equivariant sectional category} of a $G$-map $p\co E\to B$, denoted $\gsecat(p)$, is the least integer $k$ such that $B$ may be covered by $k$ invariant open sets $\{ U_1,\ldots, U_k\}$ on each of which there exists a $G$-map $s\co U_i\to E$ such that  $ps\simeq_G i_{U_i}\co  {U_i}\hookrightarrow B$.
\end{defn}
The sets $U_i\subseteq B$ in the above will be called {\em $G$-sectional categorical} for $p$.

\begin{prop}\label{secatfibr} If $p\co E\to B$ is a $G$-fibration, then $\gsecat(p)\leq k$ if and only if $B$ may be covered by $k$ invariant open sets $\{ U_1,\ldots, U_k\}$ on each of which there exists a {\em local $G$-section}, that is a $G$-map $\sigma_i\co U_i\to E$ such that  $p\sigma_i = i_{U_i}\co  {U_i}\hookrightarrow B$.
\end{prop}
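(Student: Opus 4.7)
The plan is to reduce to the (equivariant) homotopy lifting property of $p$. The ``if'' direction is immediate: a strict local $G$-section $\sigma_i\co U_i\to E$ satisfies $p\sigma_i = i_{U_i}$, which is certainly $G$-homotopic to $i_{U_i}$ via the constant homotopy. Hence a cover by $k$ invariant open sets admitting local $G$-sections witnesses $\gsecat(p)\leq k$.

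For the nontrivial ``only if'' direction, assume $\gsecat(p)\leq k$ and let $\{U_1,\ldots,U_k\}$ be a cover of $B$ by invariant open sets together with $G$-maps $s_i\co U_i\to E$ and $G$-homotopies $H_i\co U_i\times I\to B$ (with $G$ acting trivially on $I$ and diagonally on $U_i\times I$) such that $H_i(\cdot,0)=ps_i$ and $H_i(\cdot,1)=i_{U_i}$. Since $p$ is a $G$-fibration, it has the equivariant homotopy lifting property with respect to $U_i$; applying this to the $G$-map $s_i$ and the $G$-homotopy $H_i$ of $ps_i$, I obtain a $G$-homotopy $\widetilde{H}_i\co U_i\times I\to E$ satisfying $\widetilde{H}_i(\cdot,0)=s_i$ and $p\,\widetilde{H}_i=H_i$. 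I then define $\sigma_i\co U_i\to E$ by $\sigma_i(u)=\widetilde{H}_i(u,1)$. Equivariance of $\sigma_i$ follows from that of $\widetilde{H}_i$ together with the triviality of the $G$-action on $I$, and the section identity reads
\[
p\sigma_i(u)=p\,\widetilde{H}_i(u,1)=H_i(u,1)=u=i_{U_i}(u),
\]
so each $\sigma_i$ is a strict local $G$-section of $p$ over $U_i$.

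There is no real obstacle here; the only point that deserves attention is the orientation of the homotopy, i.e.\ that we must lift a $G$-homotopy that \emph{starts} at $ps_i$ (rather than ending there), in order to apply the HLP starting from the given $G$-map $s_i$. The passage to the equivariant HLP from the definition of a $G$-fibration is standard and requires no additional hypotheses beyond those already in force.
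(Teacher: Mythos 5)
Your proof is correct and follows exactly the route the paper intends: the paper's proof is just the one-line remark that the statement is ``analogous to the non-equivariant case, using the $G$-HLP,'' and your argument spells out precisely that HLP deformation of a $G$-homotopy section into a strict local $G$-section. No issues.
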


\begin{proof}
This is analogous to the non-equivariant case, using the $G$-HLP \cite[page 53]{tD}.
%The `if' part is clear. Let $U\subseteq B$ be a $G$-sectional categorical open set, and let $s\co U\to E$ be a $G$-map with $ps\simeq_G i_U$. Let $H\co U\times I\to B$ be a $G$-homotopy with $H_0 = ps$ and $H_1 = i_U$. Then the $G$-HLP gives a $G$-homotopy $\widetilde H\co U\times I\to E$ with $\widetilde H_0 = s$ and $p\widetilde H_1 = i_U$. Setting $\sigma = \widetilde H_1$ gives a $G$-section on $U$.
\end{proof}
Next we observe that equivariant sectional category of $G$-fibrations cannot increase under taking pullbacks (compare \cite[Proposition 7]{S}).

\begin{prop}\label{pullback}
Let $p\co E\to B$ be a $G$-fibration and $f\co A\to B$ be a $G$-map. The pullback $q\co A\times_B E \to A$ of $p$ along $f$ satisfies
%\[
%\xymatrix{
% A\times_B E \ar[d]^q \ar[r] & E \ar[d]^p \\
% A \ar[r]^f & B.
% }
% \]
$\gsecat(q)\leq\gsecat(p)$.
 \end{prop}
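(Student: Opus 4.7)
The plan is to reduce the statement to one about strict local $G$-sections by invoking Proposition \ref{secatfibr}, and then to pull these sections back along $f$ in the evident way. So suppose $\gsecat(p) = k$. Since $p$ is a $G$-fibration, Proposition \ref{secatfibr} supplies an invariant open cover $\{U_1,\ldots,U_k\}$ of $B$ together with local $G$-sections $\sigma_i\co U_i\to E$ satisfying $p\sigma_i = i_{U_i}$.

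Next, set $V_i = f^{-1}(U_i)\subseteq A$. Since $f$ is a $G$-map and each $U_i$ is invariant, each $V_i$ is an invariant open set, and together they cover $A$ because the $U_i$ cover $B$. I would then define candidate local $G$-sections
\[
\tau_i\co V_i \longrightarrow A\times_B E, \qquad \tau_i(a) = \bigl(a,\,\sigma_i(f(a))\bigr).
\]
This map lands in the pullback because $p(\sigma_i(f(a))) = f(a)$ by the section identity $p\sigma_i = i_{U_i}$, it is continuous since $\sigma_i$ and $f$ are, and it is $G$-equivariant since both $f$ and $\sigma_i$ are $G$-maps. Finally $q\tau_i = i_{V_i}$ by construction of the pullback, so $V_i$ is $G$-sectional categorical for $q$; in particular $q\tau_i \simeq_G i_{V_i}$ trivially, which is all the definition of $\gsecat$ requires.

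There is essentially no obstacle here: we never need to check that $q$ is itself a $G$-fibration, because to bound $\gsecat(q)$ from above it suffices to exhibit any invariant open cover admitting $G$-homotopy sections, and our $\tau_i$ are actually strict $G$-sections. The only conceptual ingredient is the use of Proposition \ref{secatfibr} on the $p$ side, in order to upgrade the $G$-homotopy sections provided by the definition of $\gsecat(p)$ to strict ones that can be composed with $f$ without introducing homotopies that might fail to lift through the pullback square.
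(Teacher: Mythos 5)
Your proposal is correct and is essentially the paper's own argument: the paper likewise uses the fact that $p$ is a $G$-fibration to work with strict local $G$-sections (via Proposition \ref{secatfibr}) and then defines $\sigma(a)=(a,sf(a))$ on $f^{-1}(U)$, exactly as you do. Your write-up just makes explicit the reduction step that the paper leaves implicit.
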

 \begin{proof}
 Given an invariant open set $U\subseteq B$ with $G$-section $s\co U\to E$ of $p$, one obtains a $G$-section $\sigma\co f^{-1}(U)\to A\times_B E$ of $q$ by setting $\sigma(a) = (a,sf(a))$.
 \end{proof}

We now study conditions under which the equivariant category of a $G$-space $B$ is an upper bound for the equivariant sectional category of a $G$-map $p\co E\to B$.

\begin{prop}[Equivariant version of the connectivity condition]\label{5}
Let $p\co E\to B$ be a $G$-map.
If $B$ is $G$-connected and $E^G\not = \emptyset$, then $\gsecat(p)\le \gcat(B)$.
\end{prop}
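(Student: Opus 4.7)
The plan is to show that every $G$-categorical open set $U \subseteq B$ is also $G$-sectional categorical for $p$, whence covers realizing $\gcat(B)$ give covers bounding $\gsecat(p)$.

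First I would unpack the definition. Given a $G$-categorical set $U \subseteq B$, the inclusion $i_U\colon U \hookrightarrow B$ is $G$-homotopic to a map $\phi\colon U \to B$ whose image lies in a single orbit $\cO(b_0)$ for some $b_0 \in B$. To produce a $G$-section up to $G$-homotopy, it would be enough to $G$-homotope $i_U$ further to a constant map at a point $b_1 \in B$ which sits in $p(E^G)$, since then sending all of $U$ to a fixed point $e_0 \in E^G$ with $p(e_0) = b_1$ gives an honest $G$-map $s\colon U \to E$ satisfying $ps = c_{b_1} \simeq_G i_U$.

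The key step, and the only substantial one, is getting from $\phi$ (with image in $\cO(b_0)$) to a constant map with value in $B^G$. Here is where the two hypotheses work together. Choose any $e_0 \in E^G$ and set $b_1 := p(e_0)$; equivariance of $p$ forces $b_1 \in B^G$, so $G_{b_1} = G \supseteq G_{b_0}$. Since $B$ is $G$-connected, Lemma \ref{4} (conservation of isotropy) applies to the points $b_0$ and $b_1$ and supplies a $G$-homotopy $F\colon \cO(b_0) \times I \to B$ from the inclusion $i_{\cO(b_0)}$ to a map with image in $\cO(b_1) = \{b_1\}$. Precomposing $F$ with $\phi \times \id_I$ gives a $G$-homotopy from $\phi$ to the constant map $c_{b_1}$, and concatenating with the original $G$-homotopy $i_U \simeq_G \phi$ yields $i_U \simeq_G c_{b_1}$.

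Finally, the constant map $s\colon U \to E$ defined by $s(u) = e_0$ is $G$-equivariant because $e_0 \in E^G$, and satisfies $ps = c_{b_1} \simeq_G i_U$ by the previous step. Hence $U$ is $G$-sectional categorical for $p$. Applying this argument to each member of a minimal $G$-categorical cover $\{U_1,\ldots,U_k\}$ of $B$ with $k = \gcat(B)$ yields a $G$-sectional categorical cover of the same cardinality, so $\gsecat(p) \leq \gcat(B)$. The main obstacle is really the isotropy-matching step, which is precisely what Lemma \ref{4} handles; without $G$-connectedness one could not bridge the orbit $\cO(b_0)$ to a fixed point, and without $E^G \neq \emptyset$ one would have no target fixed point to aim for.
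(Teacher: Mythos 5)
Your proof is correct and follows essentially the same route as the paper: take a $G$-categorical set, use Lemma \ref{4} (with $G_{b_0}\subseteq G_{p(e_0)}=G$, which needs $G$-connectedness and $E^G\neq\emptyset$) to push the orbit $\cO(b_0)$ to the fixed point $p(e_0)$, and then take the constant $G$-section at $e_0$. Nothing to add.
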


\begin{proof}
Let $U$ be a $G$-categorical set for $B$ and let $F\co U\times I\to B$ be the $G$-homotopy such that $F_0=i_U$ and $F_1=c$ with $c(U)\subseteq \cO(x_0)$. Choose an $e\in E^G$, and let $b=p(e)$. By Lemma \ref{4}, since $B$ is $G$-connected and $G_{x_0}\subseteq G_b=G$ we have that there exists  a $G$-homotopy $\Phi\co \cO(x_0)\times I\to B$ such that $\Phi_0=i_{\cO(x_0)}$ and $\Phi_1(\cO(x_0))\subseteq \cO(b)=\{b\}$.
Consider $s\co  U\to E$ given by $s(x)=e$ for all $x\in U$. The map $s$ is equivariant and the composition of the homotopies $F$ and $\Phi$ provides a homotopy from $i_U$ to $ps$.
%since $gs(x)= ge$, $s(gx)=e$ and $e\in E^G$.
%
%We will show that the map $ps\co U\to B$ is $G$-homotopic to the inclusion $i_U$. Define the $G$-homotopy $R\co U\times I\to B$ given by
%$R(x,t)=H(x,2t)$ if $t\le{1\over 2}$ and $R(x,t)=F(c(x),2t-1)$ if $t\ge{1\over 2}$. We have that $R_0=H_0=i_U$ and $R_1(x)=F_1(c(x))=ps(x)=b$, then $i_U\simeq_G ps$.
\end{proof}

\begin{prop}[Equivariant version of the surjectivity condition]\label{6}
Let $p\co E\to B$ be a $G$-map.
If $p(E^H)=B^H$ for all closed subgroups $H$ of $G$, then $\gsecat(p)\le \gcat(B)$.
\end{prop}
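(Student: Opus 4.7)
The plan is to mimic the argument for Proposition \ref{5}, but instead of choosing a single global $G$-fixed point of $E$, I would choose, for each $G$-categorical piece of $B$, an $H$-fixed point of $E$ lying over a point of $B^H$ (which exists thanks to the hypothesis $p(E^H)=B^H$). The point is that a $G$-categorical open set deforms into a single orbit $\mathcal{O}(x_0)\cong G/G_{x_0}$, and to section $p$ over that orbit one just needs a pre-image of $x_0$ whose isotropy contains $G_{x_0}$.

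More concretely, suppose $\gcat(B)=k$ with $G$-categorical cover $\{U_1,\dots,U_k\}$, and for each $i$ let $F^i\co U_i\times I\to B$ be a $G$-homotopy with $F^i_0=i_{U_i}$ and $F^i_1=c_i$, where $c_i(U_i)\subseteq\mathcal{O}(x_i)$ for some $x_i\in B$. Set $H_i=G_{x_i}$, so $x_i\in B^{H_i}$. By the hypothesis, there exists $e_i\in E^{H_i}$ with $p(e_i)=x_i$. Since $H_i=G_{x_i}\subseteq G_{e_i}$, the assignment $gx_i\mapsto ge_i$ defines a map $\phi_i\co\mathcal{O}(x_i)\to E$; it is well defined (if $gx_i=g'x_i$ then $g^{-1}g'\in H_i\subseteq G_{e_i}$), continuous (via the homeomorphism $G/G_{x_i}\cong\mathcal{O}(x_i)$ recalled in Section 3), $G$-equivariant by construction, and satisfies $p\circ\phi_i = i_{\mathcal{O}(x_i)}$.

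Set $s_i=\phi_i\circ c_i\co U_i\to E$. This is a composition of $G$-maps, hence a $G$-map, and $p\circ s_i=p\circ\phi_i\circ c_i=c_i$, which is $G$-homotopic to $i_{U_i}$ via $F^i$. Therefore each $U_i$ is $G$-sectional categorical for $p$, and $\gsecat(p)\le k=\gcat(B)$.

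The only slightly subtle point is the construction of $\phi_i$, which requires the hypothesis exactly in the form $p(E^H)=B^H$ (for $H=H_i$); choosing a preimage of $x_i$ that is not $H_i$-fixed would produce a map that fails to be well defined on the orbit. Once that is set up, the argument is routine and runs in parallel with the proof of Proposition \ref{5}.
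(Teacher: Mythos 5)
Your proof is correct and follows essentially the same route as the paper: the paper's proof also picks, for each $G$-categorical set deformed into $\mathcal{O}(x_0)$, an $H$-fixed preimage $z_0\in E^{H}$ of $x_0$ (where $H=G_{x_0}$) and defines the section by $s(x)=gz_0$ whenever $c(x)=gx_0$, which is exactly your $\phi_i\circ c_i$. The only difference is that you make explicit the well-definedness and continuity of the induced map on the orbit via $G/G_{x_i}\cong\mathcal{O}(x_i)$, which the paper leaves implicit.
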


\begin{proof}
Let $U$ be a $G$-categorical set for $B$ and let $F\co U\times I\to B$ be the $G$-homotopy such that $F_0=i_U$ and $F_1=c$ with $c(U)\subseteq \cO(x_0)$.
Let $H=G_{x_0}$, then $x_0\in B^H=p(E^H)$. Therefore there exists $z_0\in E^H$ such that $p(z_0)=x_0$. Define $s\co  U\to E$ by $s(x)=g z_0$ if $c(x)=g x_0$.
The proof that $s$ is equivariant follows from the fact that  $c$ is equivariant. Moreover, $ps$ is $G$-homotopic to the inclusion since $ps(x)=p(g z_0)=g p(z_0)=g x_0=c(x)$ and $c\simeq_G i_U$.
%If $c(gx)=g_jx_0$ then $s(gx)=g_jz_0$ and $gs(x)=gg_iz_0$. Then $s(gx)=gs(x)$ if and only if ${g_j}^{-1}g g_i\in G_{z_0}$. Note that
%$${g_j}^{-1}g g_ix_0 = {g_j}^{-1}g c(x) = {g_j}^{-1}c(gx) = {g_j}^{-1}g_jx_0 = x_0,$$
%showing that ${g_j}^{-1}g g_i\in G_{x_0}=H$. Since $z_0\in E^H$, we have ${g_j}^{-1}g g_i\in G_{z_0}$.
\end{proof}

%\begin{exs}
%\begin{enumerate}
%\item

%\end{enumerate}
%\end{exs}

%\begin{prop}\label{7} Let $p\co E\to B$ be a $G$-map.
%If there exists a subgroup $H$ of $G$ and a path component
%$B_i$ of $B^H$ such that $p(E^H)\cap B_i=\emptyset$, then
%$\gsecat(p)=\infty$.
%\end{prop}

%\begin{proof}
%Let $x\in B_i$ and let $U$ be a neighborhood of $x$. Suppose that $U$ is $G$-sectional categorical for $p$. Then there exists a $G$-map $s\co  U\to E$ such that $ps\simeq_G i_U$. We have that $hx=x$ for all $h\in H$ since $x\in B^H$ and $hs(x)=s(hx)=s(x)$ since $s$ is equivariant. Therefore $s(x)\in E^H$. It follows that $ps(x)\in p(E^H)$ and $i_U(x)=x\in B_i$. But since $p(E^H)\cap B_i=\emptyset$, they have to be in different path components. Therefore there is no path from $ps(x)$ to $x$ in $B^H$ and $U$ cannot be $G$-sectional categorical for $p$.
%\end{proof}

%\begin{cor}\label{7'} If there exists a subgroup $H$ of $G$ with $E^H=\emptyset$ and $B^H\not = \emptyset$ then $\gsecat(p)=\infty$.
%\end{cor}

Finally in this section, we investigate equivariant sectional category in the case when the total space is $G$-contractible.

\begin{prop}\label{Gcontract} Let $p\co E\to B$ be a $G$-map.
If $E$ is a $G$-contractible space then $\gcat (B) \le \gsecat (p)$.
\end{prop}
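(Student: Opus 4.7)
The plan is to show that any cover witnessing $\gsecat(p)\leq k$ is in fact a $G$-categorical cover of $B$, from which the inequality is immediate.

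First I would unpack the hypotheses. Let $k=\gsecat(p)$ and choose an invariant open cover $\{U_1,\dots,U_k\}$ of $B$ together with $G$-maps $s_i\co U_i\to E$ such that $ps_i\simeq_G i_{U_i}$. Since $E$ is $G$-contractible, there exists a $G$-homotopy $H\co E\times I\to E$ with $H_0=\id_E$ and $H_1(E)\subseteq\cO(e_0)$ for some $e_0\in E$.

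The key step is to produce, for each $i$, a $G$-homotopy of $i_{U_i}$ into a single orbit of $B$. I would define $F_i\co U_i\times I\to E$ by $F_i(x,t)=H(s_i(x),t)$. This is a $G$-homotopy from $s_i$ to a map whose image lies in $\cO(e_0)$. Post-composing with the equivariant map $p$ gives a $G$-homotopy $pF_i\co U_i\times I\to B$ from $ps_i$ to a map with image in $p(\cO(e_0))=\cO(p(e_0))$; the latter equality uses equivariance of $p$, which sends orbits onto orbits. Concatenating $pF_i$ with a $G$-homotopy realizing $ps_i\simeq_G i_{U_i}$ yields a $G$-homotopy from $i_{U_i}$ to a map into the single orbit $\cO(p(e_0))$, so $U_i$ is $G$-categorical.

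Since $\{U_1,\dots,U_k\}$ is then a $G$-categorical cover of $B$, we obtain $\gcat(B)\leq k=\gsecat(p)$. I do not expect any real obstacle here: the only subtlety worth flagging explicitly is that the image of an orbit under a $G$-map is an orbit (so that contracting $E$ into one orbit forces the composite $ps_i$ to be $G$-homotopic into one orbit of $B$), together with the transitivity of the relation $\simeq_G$ needed to splice the two homotopies.
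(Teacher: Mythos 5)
Your proof is correct and follows essentially the same route as the paper: contract $E$ equivariantly into one orbit, push the contraction forward along the $G$-map $p$ (using that $p$ carries orbits onto orbits), and splice with the homotopy $ps_i\simeq_G i_{U_i}$ to see each $U_i$ is $G$-categorical. The paper merely phrases this more compactly as $pcs\simeq_G ps\simeq_G i_U$ with $pcs(U)\subseteq\cO(p(x))$, where $c$ is the terminal map of the contraction.
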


\proof
Let $U$ be a $G$-sectional categorical set for $p$ and $s\co  U\to E$ be a $G$-map such that $ps\simeq_G i_U$. We have that the identity map $\id_E$ is $G$-homotopic to a $G$-map $c$ whose image is contained in a single orbit $\cO(x)$.
Then $pcs\simeq_G ps\simeq_Gi_U$ and  $pcs(U)$ is contained in $\cO(p(x))$. Thus $U$ is $G$-categorical in $B$.
\qed

\begin{cor} \label{contractible} Let $p\co E\to B$ be a $G$-map and $E$ be a $G$-contractible space. If $B$ is $G$-connected and $E^G\not = \emptyset$ or if $p(E^H)=B^H$ for all closed subgroups $H$ of $G$, then $\gsecat (p)= \gcat (B)$.
\end{cor}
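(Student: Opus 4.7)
The plan is to observe that this corollary follows essentially immediately by combining the three propositions just established, since each hypothesis of the corollary is tailored to invoke one of them.

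First I would derive the inequality $\gcat(B) \le \gsecat(p)$ directly from Proposition \ref{Gcontract}, which applies because $E$ is assumed $G$-contractible. This requires no extra argument beyond citing that proposition.

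For the reverse inequality $\gsecat(p) \le \gcat(B)$, I would split into the two stated cases. Under the first hypothesis ($B$ is $G$-connected and $E^G \neq \emptyset$), Proposition \ref{5} gives the bound immediately. Under the second hypothesis ($p(E^H) = B^H$ for every closed subgroup $H \le G$), Proposition \ref{6} gives it. In either scenario, combining with the first step yields equality.

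There is no genuine obstacle: the corollary is a packaging of previously proved implications, and the two separate hypotheses are precisely the two sufficient conditions for the $\le$ direction that were established in Propositions \ref{5} and \ref{6}. The only point worth flagging is that the $G$-contractibility of $E$ is used only for the $\ge$ direction, while $G$-connectedness of $B$ (or surjectivity on fixed sets) is used only for the $\le$ direction, so the two halves of the hypothesis play cleanly separated roles and no interaction between them needs to be analysed.
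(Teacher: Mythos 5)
Your proposal is correct and matches the paper's (implicit) argument exactly: the corollary is obtained by combining Proposition \ref{Gcontract} for the inequality $\gcat(B)\le\gsecat(p)$ with Proposition \ref{5} or Proposition \ref{6}, according to which hypothesis holds, for the reverse inequality. Nothing further is needed.
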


In particular, if $X$ is a $G$-connected space with a fixed point $x\in X^G$, then the inclusion $p\co \{x\}\to X$ is a $G$-map with $\gsecat (p)= \gcat (X)$.

\section{Equivariant topological complexity}

When $X$ is a $G$-space, the free path fibration $\pi\co X^I\to X\times X$ is a $G$-fibration with respect to the actions
$$G\times X^I\to X^I, \qquad G\times X\times X\to X\times X,$$
$$g(\gamma)(t)=g(\gamma(t)),\qquad g(x,y)=(gx,gy).$$
The verification of this fact is straightforward; one may use a $G$-equivariant version of \cite[Theorem 2.8.2]{Spa}.

\begin{defn} The {\em equivariant topological complexity} of the
  $G$-space $X$, denoted $\TCG(X)$, is defined as the equivariant sectional category of the free path fibration $\pi\co X^I\to X\times X$. That is, $$\TCG(X)=\gsecat(\pi\co X^I\to X\times X).$$
  \end{defn}

  In other words, the equivariant topological complexity is the least integer $k$ such that
  $X\times X$ may be covered by $k$ invariant open sets
  $\{U_1,\ldots ,U_k\}$, on each of which there is a $G$-equivariant
  map $s_i\co U_i\to X^I$ such that $\pi s_i\simeq_G i_{U_i}\co  U_i\hookrightarrow X\times X$. (Since $\pi$ is a $G$-fibration, this is equivalent to requiring $\sigma_i\co U_i\to X^I$ such that $\pi \sigma_i = i_{U_i}$.) If no such integer exists then we set $\TCG(X)=\infty$.

  We first show that equivariant topological complexity is a $G$-homotopy invariant. Let $X$ and $Y$ be $G$-spaces. We say that $X$ {\em $G$-dominates} $Y$ if there exist $G$-maps $\phi\co X\to Y$ and $\psi\co Y\to X$ such that $\phi\psi\simeq_G \id_Y$. If in addition $\psi\phi\simeq_G \id_X$, then $\phi$ and $\psi$ are {\em $G$-homotopy equivalences}, and $X$ and $Y$ are {\em $G$-homotopy equivalent}, written $X\simeq_G Y$.

\begin{thm}\label{Ghinv}
If $X$ $G$-dominates $Y$ then $\TCG(X)\ge \TCG(Y)$. In particular, if $X\simeq_G Y$ then $\TCG(X)=\TCG(Y)$.
\end{thm}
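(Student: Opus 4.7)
The plan is to mimic the standard non-equivariant proof of homotopy invariance of $\TC$, but taking care that every construction is equivariant. Assume $X$ $G$-dominates $Y$ via $G$-maps $\phi\co X\to Y$ and $\psi\co Y\to X$ together with a $G$-homotopy $H\co Y\times I\to Y$ with $H_0 = \id_Y$ and $H_1 = \phi\psi$. Suppose $\TCG(X) = k$ and, using Proposition \ref{secatfibr} (the $G$-fibration $\pi_X\co X^I\to X\times X$ allows strict local $G$-sections), pick invariant open sets $\{U_1,\ldots,U_k\}$ covering $X\times X$ together with $G$-maps $s_i\co U_i\to X^I$ satisfying $\pi_X s_i = i_{U_i}$.

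Next I would pull the cover back along the $G$-map $\psi\times\psi\co Y\times Y\to X\times X$, setting $V_i = (\psi\times\psi)^{-1}(U_i)$; these are invariant open sets covering $Y\times Y$. For each $i$, I would define a local $G$-section $\sigma_i\co V_i\to Y^I$ of $\pi_Y$ by concatenating three natural paths: for $(y_1,y_2)\in V_i$, let $\sigma_i(y_1,y_2)$ be the path in $Y$ which on $[0,1/3]$ follows $t\mapsto H(y_1,3t)$ from $y_1$ to $\phi\psi(y_1)$, on $[1/3,2/3]$ follows $t\mapsto \phi\bigl(s_i(\psi(y_1),\psi(y_2))(3t-1)\bigr)$ from $\phi\psi(y_1)$ to $\phi\psi(y_2)$, and on $[2/3,1]$ follows $t\mapsto H(y_2,3-3t)$ from $\phi\psi(y_2)$ back to $y_2$. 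By construction $\pi_Y\sigma_i = i_{V_i}$.

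It remains to check that $\sigma_i$ is a $G$-map and is continuous. Equivariance is the key point and follows segment-by-segment: $H$ is a $G$-homotopy, $\phi$ and $\psi$ are $G$-maps, and $s_i$ is a $G$-map (and $G$ acts on $X^I$ pointwise), so each piece satisfies $\sigma_i(gy_1,gy_2)(t) = g\cdot\sigma_i(y_1,y_2)(t)$; continuity is standard given the compact-open topology on path spaces. This gives $V_1,\ldots,V_k$ as $G$-sectional categorical sets for $\pi_Y$, hence $\TCG(Y)\leq k = \TCG(X)$. Specializing to a $G$-homotopy equivalence, where the inequality also holds in the reverse direction, yields $\TCG(X) = \TCG(Y)$.

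The only real obstacle is bookkeeping: making sure we work with strict local sections (permissible by Proposition \ref{secatfibr}), so that the concatenation produces a path with the correct endpoints rather than one merely $G$-homotopic to $i_{V_i}$; and verifying equivariance of the concatenated formula. Both amount to routine checks once the ingredients are in place.
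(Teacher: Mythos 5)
Your proof is correct, and its skeleton is the same as the paper's: pull the $G$-sectional categorical cover of $X\times X$ back along $\psi\times\psi$ to get the invariant open cover $V_i=(\psi\times\psi)^{-1}(U_i)$ of $Y\times Y$, then transport the sections using $\phi$. Where you diverge is in how the defect $\phi\psi\neq\id_Y$ is handled. The paper stays entirely at the level of homotopy sections: it takes $s_i$ with $\pi_X s_i\simeq_G i_{U_i}$, forms the composite $\widetilde\phi\circ s_i\circ\overline{(\psi\times\psi)}$ (with $\widetilde\phi\co X^I\to Y^I$ the induced map on path spaces), and verifies in one line of formal manipulation that $\pi_Y\sigma\simeq_G(\phi\times\phi)(\psi\times\psi)i_{V_i}\simeq_G i_{V_i}$, so the flexibility built into the definition of $G$-sectional category absorbs the correction. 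You instead equivariantize Farber's original concrete argument: you first invoke Proposition \ref{secatfibr} to replace homotopy sections by strict local $G$-sections of the $G$-fibration $\pi_X$, and then repair the endpoints by concatenating with the $G$-homotopy $H$ from $\id_Y$ to $\phi\psi$, obtaining genuine strict local $G$-sections $\sigma_i$ of $\pi_Y$. Your endpoint bookkeeping and the segment-by-segment equivariance check are right (equivariance of $H$, $\phi$, $\psi$, $s_i$ and the pointwise $G$-action on $Y^I$ are exactly what is needed), and continuity of the concatenation is routine via the exponential law. The trade-off: the paper's composition argument is shorter, needs no strict sections, no concatenation, and no continuity check; your argument requires the $G$-fibration property of $\pi_X$ (harmless here, since the paper establishes it) but in return produces honest local $G$-sections rather than homotopy sections, making the output slightly more explicit.
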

\begin{proof}
Let $\phi\co X\to Y$ and $\psi\co Y\to X$ be $G$-maps such that $\phi\psi\simeq_G \id_Y$, and let $U\subseteq X\times X$ be $G$-sectional categorical for $\pi_X\co X^I\to X\times X$. Hence there exists a $G$-map $s\co U\to X^I$ such that $\pi_X s\simeq_G i_U\co U\hookrightarrow X\times X$.

We will show that $V=(\psi\times \psi)^{-1}U\subseteq Y\times Y$ is $G$-sectional categorical for $\pi_Y\co Y^I\to Y\times Y$. Denote by $\overline{(\psi\times\psi)}\co V\to U$ the map obtained by restricting the domain and range of $(\psi\times\psi)$. Let $\sigma\co V\to Y^I$ be the composition $\sigma = \widetilde\phi \circ s \circ \overline{(\psi\times\psi)}$, where $\widetilde\phi\co X^I\to Y^I$ is the map induced by $\phi$. Then
%\begin{align*}
% \pi_Y\sigma & = \pi_Y\widetilde\phi s \overline{(\psi\times\psi)}\\
%            & = (\phi\times\phi)\pi_X s \overline{(\psi\times\psi)} \\
%            & \simeq_G (\phi\times\phi) i_U \overline{(\psi\times\psi)} \\
%            & = (\phi\times\phi)(\psi\times \psi) i_V \\
%           & \simeq_G  i_V,
% \end{align*}
$$
 \pi_Y\sigma
% =\pi_Y\widetilde\phi s \overline{(\psi\times\psi)}
            = (\phi\times\phi)\pi_X s \overline{(\psi\times\psi)}
            \simeq_G (\phi\times\phi) i_U \overline{(\psi\times\psi)}
            = (\phi\times\phi)(\psi\times \psi) i_V
           \simeq_G  i_V,
$$
 hence $V$ is $G$-sectional categorical for $\pi_Y$.

 Now if $\{U_1,\ldots , U_k\}$ is a $G$-sectional categorical cover of $X\times X$, then $\{V_1,\ldots, V_k\}$ defined as above is a $G$-sectional categorical cover of $Y\times Y$. This proves the first statement, and the second follows immediately.
\end{proof}

It is obvious that $\TC(X)\leq\TCG(X)$ for any $G$-space $X$. More generally we have the following.

\begin{prop}\label{fixed}
Let $X$ be a $G$-space, and let $H$ and $K$ be closed subgroups of $G$ such that $X^H$ is $K$-invariant. Then $\TCK(X^H)\le\TCG (X)$.
\end{prop}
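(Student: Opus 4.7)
The plan is to start with a $G$-sectional categorical cover $\{U_1,\ldots,U_k\}$ of $X\times X$ with $k=\TCG(X)$, together with $G$-maps $s_i\co U_i\to X^I$ and $G$-homotopies $F_i\co U_i\times I\to X\times X$ from $i_{U_i}$ to $\pi s_i$, and to restrict everything to $X^H\times X^H$. The candidate cover of $X^H\times X^H$ will be $V_i := U_i\cap(X^H\times X^H)$, and the natural claim is that this is a $K$-sectional categorical cover for $\pi_{X^H}\co (X^H)^I\to X^H\times X^H$.

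The first step is to verify that each $V_i$ is a $K$-invariant open set in $X^H\times X^H$: openness is automatic, and $K$-invariance follows because $K\subseteq G$ makes $U_i$ a fortiori $K$-invariant, while $X^H$ is $K$-invariant by hypothesis. The $V_i$ clearly cover $X^H\times X^H$. Next I would restrict $s_i$ to $V_i$. The key observation is the identification $(X^H)^I = (X^I)^H$, so to land in $(X^H)^I$ it suffices to check $s_i(v)\in (X^I)^H$ for $v\in V_i$. For any $h\in H$, equivariance gives $h\cdot s_i(v) = s_i(h\cdot v) = s_i(v)$, since $v\in X^H\times X^H$ is fixed by $h$; hence $s_i$ restricts to a well-defined map $\tilde s_i\co V_i\to (X^H)^I$, which is $K$-equivariant because $s_i$ is $G$-equivariant and $K\subseteq G$.

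It remains to show $\pi_{X^H}\tilde s_i\simeq_K i_{V_i}$. Exactly the same argument applies to the homotopy $F_i$: for $v\in V_i$ and $h\in H$, one has $h\cdot F_i(v,t) = F_i(h\cdot v, t) = F_i(v,t)$, so $F_i(V_i\times I)\subseteq (X\times X)^H = X^H\times X^H$, and $F_i$ restricts to a $K$-homotopy $\tilde F_i\co V_i\times I\to X^H\times X^H$ from $i_{V_i}$ to $\pi_{X^H}\tilde s_i$. This exhibits $\{V_1,\ldots,V_k\}$ as a $K$-sectional categorical cover for the free path fibration on $X^H$, proving $\TCK(X^H)\le k = \TCG(X)$.

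There is no real obstacle here: the proof is essentially a bookkeeping exercise showing that $G$-equivariant data automatically restricts along fixed-point inclusions. The mild subtlety, such as it is, lies in using equivariance in both directions simultaneously — once to see that points of $V_i$ are fixed by $H$ (so the ambient equivariance forces outputs to be $H$-fixed as well), and once to see that $K$-equivariance of the restricted data is inherited from $G$-equivariance of the originals.
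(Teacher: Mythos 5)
Your proof is correct and takes essentially the same approach as the paper: restrict the cover and the equivariant sections to $X^H\times X^H$, observing that $G$-equivariance forces $H$-fixed inputs to have $H$-fixed outputs, and that $K$-equivariance is inherited since $K\subseteq G$. The only (harmless) difference is that the paper first invokes Proposition \ref{secatfibr} (as $\pi$ is a $G$-fibration) to work with strict local $G$-sections, so $\pi\sigma_V=i_V$ holds on the nose and no homotopy needs restricting, whereas you carry the $G$-homotopies $F_i$ along and restrict them as well---both routes are valid.
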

\begin{proof}
Let $U\subseteq X\times X$ be $G$-sectional categorical for $\pi\co X^I\to X\times X$, and let $\sigma\co U\to X^I$ be a $G$-map such that $\pi \sigma =  i_{U}\co  U\hookrightarrow X\times X$.

  Define $V=U\cap (X^H\times X^H)\subseteq X^H\times X^H$, and note that $V$ is $K$-invariant.
 Since $\sigma$ is $G$-equivariant it takes $H$-fixed points to $H$-fixed points, and so restricts to a $K$-equivariant map $\sigma_V\co V\to (X^H)^I$. It is clear that $\pi\sigma_V=i_V$ and therefore $V$ is $K$-sectional categorical for $\pi|_{X^H}\co (X^H)^I\to X^H\times X^H$.
\end{proof}

\begin{cor}\label{subgroups} Let $X$ be a $G$-space. Then
\begin{enumerate}
\item $\TC(X^H)\le\TCG (X)$ for all closed subgroups $H$  of $G$.
\item $\TCK(X)\le\TCG (X)$ for all closed subgroups $K$  of $G$.
\end{enumerate}
\end{cor}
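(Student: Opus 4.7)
The plan is to derive both statements as immediate specializations of Proposition~\ref{fixed}, which says $\TCK(X^H)\le\TCG(X)$ whenever $X^H$ is $K$-invariant. The only thing to check in each case is that the hypothesis on $K$-invariance is met after specialization, and to identify the resulting equivariant topological complexity with the desired invariant.

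For part (1), I would take $K$ to be the trivial subgroup $\{e\}\subseteq G$. Any subset of $X$ (in particular $X^H$) is trivially invariant under the action of $\{e\}$, so the hypothesis of Proposition~\ref{fixed} is satisfied. Moreover, $\{e\}$-equivariance is just continuity, so $\mathbf{TC}_{\{e\}}(X^H)=\TC(X^H)$, and Proposition~\ref{fixed} yields $\TC(X^H)\le\TCG(X)$.

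For part (2), I would take $H=\{e\}$, so that $X^H=X^{\{e\}}=X$. This set is $K$-invariant since $K\subseteq G$ acts on $X$ by restriction, so again the hypothesis of Proposition~\ref{fixed} holds. Applying that proposition gives $\TCK(X)=\TCK(X^{\{e\}})\le\TCG(X)$.

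There is no genuine obstacle here: the corollary is a direct corollary in the strongest sense, and the only thing to verify is the compatibility of conventions (namely, that $\mathbf{TC}_{\{e\}}$ coincides with the non-equivariant $\TC$, which is immediate from the definition, since invariance and equivariance with respect to the trivial group reduce to the usual openness and continuity conditions).
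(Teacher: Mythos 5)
Your proposal is correct and matches the paper's intent exactly: the corollary is stated as an immediate consequence of Proposition~\ref{fixed}, obtained by taking $K=\{e\}$ for part (1) (where $\mathbf{TC}_{\{e\}}=\TC$ since trivial-group invariance and equivariance are vacuous) and $H=\{e\}$ for part (2) (where $X^{\{e\}}=X$ is clearly $K$-invariant). Nothing further is needed.
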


\begin{cor} If $X$ is not $G$-connected, then $\TCG(X)=\infty$.
\end{cor}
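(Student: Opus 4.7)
The plan is to reduce to the non-equivariant statement $\TC(Y)=\infty$ for a non-path-connected space $Y$, and then to invoke Corollary \ref{subgroups}(1) applied to a well-chosen closed subgroup.

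By the hypothesis and Definition of $G$-connectedness, the failure of $G$-connectedness means there exists some closed subgroup $H\le G$ such that the fixed point set $X^H$ is not path-connected. In particular, if $X$ itself is not path-connected we may take $H=\{e\}$. By Corollary \ref{subgroups}(1) we have
\[
\TC(X^H)\le \TCG(X),
\]
so it suffices to show $\TC(Y)=\infty$ for any space $Y$ that fails to be path-connected, applied to $Y=X^H$.

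To prove this auxiliary statement, pick $a,b\in Y$ lying in distinct path components and consider the point $(a,b)\in Y\times Y$. If $\TC(Y)<\infty$ there would exist an open neighbourhood $U\subseteq Y\times Y$ of $(a,b)$ together with a map $s\co U\to Y^I$ and a homotopy $H\co U\times I\to Y\times Y$ with $H_0=\pi s$ and $H_1=i_U$. Writing $H(u,t)=(H^1(u,t),H^2(u,t))$ and evaluating at $u=(a,b)$, the paths $t\mapsto H^1((a,b),t)$ and $t\mapsto H^2((a,b),t)$ connect $s(a,b)(0)$ to $a$ and $s(a,b)(1)$ to $b$ respectively in $Y$. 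Concatenating these with the path $s(a,b)\co I\to Y$ produces a path from $a$ to $b$ in $Y$, contradicting the fact that they lie in distinct path components. Hence $\TC(Y)=\infty$.

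Combining the two steps yields $\TCG(X)\ge \TC(X^H)=\infty$, as desired. The only mildly subtle point is the auxiliary claim on non-path-connected spaces, which amounts to observing that a homotopy section of the free path fibration over a neighbourhood of $(a,b)$ actually forces $a$ and $b$ to lie in the same path component; everything else is a direct appeal to Corollary \ref{subgroups}(1).
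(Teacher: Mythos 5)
Your proposal is correct and follows exactly the paper's route: choose a closed subgroup $H$ with $X^H$ not path-connected and apply Corollary \ref{subgroups}(1) to get $\TCG(X)\geq\TC(X^H)=\infty$. The only difference is that you also spell out the standard fact that $\TC(Y)=\infty$ for a non-path-connected $Y$, which the paper takes as known; your verification of it is correct.
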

\begin{proof}  Let $H$ be a closed subgroup of $G$ such that $X^H$ is not path-connected. Then $\TC(X^H)=\infty$ and the result follows from Corollary \ref{subgroups} (1).
\end{proof}

The next three results describe the basic relationship of equivariant topological complexity with equivariant category.

\begin{prop}\label{upper} If $X$ is $G$-connected, then $\TCG(X)\le \gcat(X\times X)$.
\end{prop}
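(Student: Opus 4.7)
The plan is to apply the equivariant surjectivity condition (Proposition \ref{6}) to the $G$-fibration $\pi\co X^I\to X\times X$. That proposition tells us that $\gsecat(\pi)\le\gcat(X\times X)$ provided $\pi((X^I)^H)=(X\times X)^H$ for every closed subgroup $H\le G$. Thus the whole task reduces to identifying the fixed point sets of the two $G$-spaces and checking surjectivity of $\pi$ on fixed points.

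First I would identify the $H$-fixed sets. A path $\gamma\co I\to X$ is fixed by $H$ if and only if $h\gamma(t)=\gamma(t)$ for all $h\in H$ and $t\in I$, i.e.\ if and only if $\gamma$ lands in $X^H$; hence $(X^I)^H = (X^H)^I$. Since $G$ acts diagonally on $X\times X$, we also have $(X\times X)^H = X^H\times X^H$. The inclusion $\pi((X^H)^I)\subseteq X^H\times X^H$ is immediate from the endpoints of an $H$-fixed path lying in $X^H$.

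For the reverse inclusion, given any $(x,y)\in X^H\times X^H$, the $G$-connectedness hypothesis says $X^H$ is path-connected, so we can pick a path $\alpha\co I\to X^H$ from $x$ to $y$; regarded as an element of $(X^H)^I=(X^I)^H$, it satisfies $\pi(\alpha)=(x,y)$. This establishes $\pi((X^I)^H)=(X\times X)^H$ for every closed subgroup $H$, and applying Proposition \ref{6} yields $\TCG(X)=\gsecat(\pi)\le\gcat(X\times X)$.

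There is really no main obstacle here — the proof is a direct invocation of the surjectivity condition established earlier. The only place $G$-connectedness enters is precisely to guarantee that paths between arbitrary points of each $X^H$ exist; without it the surjectivity would fail and, correspondingly, $\TCG(X)$ would be infinite by the preceding corollary.
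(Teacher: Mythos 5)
Your proof is correct and is essentially the same as the paper's: both identify $(X^I)^H=(X^H)^I$ and $(X\times X)^H=X^H\times X^H$, use $G$-connectedness to see that $\pi$ is surjective on $H$-fixed points for every closed subgroup $H$, and then invoke Proposition \ref{6}. Your write-up just spells out the fixed-point identifications in a bit more detail than the paper does.
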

\begin{proof}  Given a closed subgroup $H$ of $G$, $(X\times X)^H=X^H\times X^H$ is path-connected since $X$ is $G$-connected. Then the map
$\pi|_{X^H}\co  (X^H)^I \to X^H\times X^H$ is surjective and the result follows from Proposition \ref{6}.
\end{proof}

\begin{prop}\label{lower}  If $X$ is $G$-connected, and $H=G_z\subseteq G$ is the isotropy group of some $z\in X$, then
$\cat_H(X)\le\TCG(X)$.
\end{prop}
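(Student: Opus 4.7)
The plan is to mimic the classical argument showing $\cat(X)\le\TC(X)$, specialising the second coordinate to the chosen point $z$, and checking that the usual construction is $H$-equivariant because $H$ fixes $z$.

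First, let $k=\TCG(X)$. By Proposition \ref{secatfibr}, since $\pi\co X^I\to X\times X$ is a $G$-fibration, we may choose a cover of $X\times X$ by invariant open sets $\{U_1,\ldots,U_k\}$, each admitting a strict local $G$-section $\sigma_i\co U_i\to X^I$ (that is, $\pi\sigma_i=i_{U_i}$). Next, for each $i$, consider the continuous map $j\co X\to X\times X$ defined by $j(x)=(x,z)$, and set
\[
V_i = j^{-1}(U_i)= \{x\in X\mid (x,z)\in U_i\}.
\]
Each $V_i$ is open in $X$, and the sets $V_i$ cover $X$ because for any $x\in X$ the pair $(x,z)$ lies in some $U_i$. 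I would then check that $V_i$ is $H$-invariant: if $x\in V_i$ and $h\in H=G_z$, then $(hx,z)=(hx,hz)=h(x,z)\in U_i$ by $G$-invariance of $U_i$, so $hx\in V_i$.

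The key step is to exhibit each $V_i$ as $H$-categorical, with $\mathcal{O}_H(z)=\{z\}$ as the target orbit. Define $F_i\co V_i\times I\to X$ by
\[
F_i(x,t) = \sigma_i(x,z)(t).
\]
Since $\pi\sigma_i=i_{U_i}$, we have $F_i(x,0)=x$ and $F_i(x,1)=z$, so $F_i$ is a homotopy from $i_{V_i}$ to the constant map at $z$, whose image is contained in the single $H$-orbit $\{z\}$. Equivariance follows from the $G$-equivariance of $\sigma_i$ together with $hz=z$: for $h\in H$,
\[
F_i(hx,t)=\sigma_i(hx,z)(t)=\sigma_i(h(x,z))(t)=h\cdot\sigma_i(x,z)(t)=hF_i(x,t).
\]
Hence each $V_i$ is $H$-categorical, and $\{V_1,\ldots,V_k\}$ is an $H$-categorical cover of $X$, giving $\cat_H(X)\le k=\TCG(X)$.

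The main potential obstacle, i.e.\ producing a genuine section rather than only a homotopy section, is neutralised at the outset by invoking Proposition \ref{secatfibr}; once one has strict local $G$-sections, the rest is a direct verification. The hypothesis that $H$ actually fixes $z$ is used in exactly two places: to conclude that $V_i$ is $H$-invariant, and to conclude that the homotopy $F_i$ is $H$-equivariant with target a single $H$-orbit. The $G$-connectedness hypothesis is not needed for the inequality itself (it holds trivially when $\TCG(X)=\infty$), but it ensures the statement is nontrivial in combination with the preceding corollary.
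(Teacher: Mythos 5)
Your proof is correct, and every step checks out: the sets $V_i=\{x\mid (x,z)\in U_i\}$ are open, cover $X$, and are $H$-invariant precisely because $hz=z$; the homotopy $F_i(x,t)=\sigma_i(x,z)(t)$ is continuous, starts at the inclusion, ends in the single $H$-orbit $\{z\}$, and is $H$-equivariant by the $G$-equivariance of $\sigma_i$; and the reduction to strict sections via Proposition \ref{secatfibr} is legitimate since $\pi$ is a $G$-fibration. The difference from the paper is one of packaging rather than of underlying idea: the paper also specialises one coordinate to $z$, but does so by pulling back $\pi$ along $j(x)=(z,x)$ to obtain the based path fibration $p\co PX\to X$, and then concludes in one line from three of its general results, namely $\cat_H(X)\le\secat_H(p)$ because $PX$ is $H$-contractible (Proposition \ref{Gcontract}), $\secat_H(p)\le \TC_H(X)$ by pullback monotonicity (Proposition \ref{pullback}), and $\TC_H(X)\le\TCG(X)$ by Corollary \ref{subgroups}(2). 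Your argument inlines exactly these steps: restricting the cover to the slice $X\times\{z\}$ is the pullback, and your $F_i$ is the section composed with the evident $H$-equivariant contraction of the path space. What the paper's route buys is brevity and reuse of machinery already established (and it exhibits $PX$ as an $H$-contractible total space, which is conceptually clarifying); what your route buys is a self-contained, elementary verification that makes visible where the hypothesis $H=G_z$ is used. Your closing remark is also accurate: $G$-connectedness is not needed for the inequality itself, in either proof.
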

\begin{proof}
We have $\TC_H(X)\le \TCG(X)$ by Corollary \ref{subgroups} (2). When we pull back $\pi\co X^I\to X\times X$ along the $H$-equivariant map
\[
j\co X\to X\times X,\qquad j(x)=(z,x)
\]
we obtain an $H$-fibration $p\co PX\to X$ whose total space $PX=\{ \gamma\co I\to X \mid \gamma(0)=z\}$ is $H$-contractible. Therefore $\cat_H(X)\le \secat_H(p)\le \TC_H(X)$ by Propositions \ref{Gcontract} and \ref{pullback}.
\end{proof}

\begin{cor}\label{catfixed} Let $X$ be a $G$-connected $G$-space with $X^G\not=\emptyset$. Then
\begin{enumerate}
\item  $\gcat(X)\le\TCG(X)\le 2\gcat(X)-1$.
\item  $\TCG(X)=1$ if and only if $X$ is $G$-contractible.
\end{enumerate}
\end{cor}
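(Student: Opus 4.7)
The plan is to deduce both statements by assembling earlier results, with almost no new argument required.

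For the lower bound in (1), I would apply Proposition \ref{lower} with $z$ taken to be any point of $X^G$. Then $H = G_z = G$ and $\cat_H(X) = \gcat(X)$, so the proposition delivers $\gcat(X) \le \TCG(X)$ immediately.

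For the upper bound in (1), I would first invoke Proposition \ref{upper} to get $\TCG(X) \le \gcat(X \times X)$, and then apply the product inequality Theorem \ref{productI} with $Y = X$. Both factors are $G$-connected by hypothesis, and $X^G \neq \emptyset$ supplies the fixed-point assumption, so the theorem yields $\gcat(X \times X) \le 2\gcat(X) - 1$. The main (only) snag here is that Theorem \ref{productI} also requires $X \times X$ to be completely normal; this should either be stated as an extra hypothesis on $X$ or noted as implicit in the paper's standing blanket assumptions.

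For (2), the reverse direction is a one-line consequence of (1): if $X$ is $G$-contractible then $\gcat(X) = 1$, so $1 \le \TCG(X) \le 2\cdot 1 - 1 = 1$. For the forward direction, the assumption $\TCG(X) = 1$ means that $X \times X$ itself is $G$-sectional categorical for $\pi$; because $\pi$ is a $G$-fibration, Proposition \ref{secatfibr} upgrades this to a genuine $G$-equivariant section $\sigma \co X\times X \to X^I$ with $\pi\sigma = \id$. Choosing $x_0 \in X^G$, I would define a $G$-homotopy $H \co X \times I \to X$ by $H(x,t) = \sigma(x_0,x)(t)$. Then $H_0 \equiv x_0$ and $H_1 = \id_X$, and equivariance follows from that of $\sigma$ combined with $gx_0 = x_0$, since
\[
gH(x,t) = g\sigma(x_0,x)(t) = \sigma(gx_0, gx)(t) = \sigma(x_0, gx)(t) = H(gx,t).
\]
As $\cO(x_0) = \{x_0\}$, this exhibits $X$ itself as $G$-categorical, i.e.\ $\gcat(X) = 1$.

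The principal obstacle is cosmetic rather than substantive: clarifying the complete normality hypothesis needed to invoke Theorem \ref{productI} in the upper bound of (1). Once that is granted, everything else is a direct combination of previously proved results.
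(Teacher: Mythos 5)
Your proposal is correct and takes essentially the same route as the paper: part (1) is exactly the combination of Propositions \ref{lower} (applied at a fixed point, so $H=G$) and \ref{upper} with Theorem \ref{productI}, and your flag about the complete normality of $X\times X$ is a hypothesis the paper also leaves implicit at this point. The only divergence is in the forward direction of (2), where you construct an explicit $G$-contraction from a strict section via Proposition \ref{secatfibr}; this is correct but unnecessary, since the paper simply reads off $\gcat(X)\le\TCG(X)=1$ from part (1).
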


\begin{proof}
Part (1) follows directly from Propositions \ref{upper} and \ref{lower}, and Theorem \ref{productI}. Part (2) follows from part (1), since by definition $X$ is $G$-contractible if and only if $\gcat(X)=1$.
\end{proof}

We now turn to examples.

\begin{exam}[Spheres under reflection] \label{spheres} For $n\geq 1$, let $X=S^n\subseteq \R^{n+1}$ with the group $G=\Z_2$ acting by the reflection given by multiplication by $-1$ in the last co-ordinate.

When $n=1$ the fixed point set $X^G = \{(1,0),(-1,0)\}$ is disconnected, and so $\TCG(X) = \infty$ in this case.

When $n\geq 2$ the fixed point set $X^G = S^{n-1}$ is the equatorial sphere, hence is connected. Note that $\gcat(X)=2$ in this case ($X$ is clearly not $G$-contractible since the orbits are discrete; we leave it to the reader to construct a cover by two $G$-categorical open sets). Therefore we have
\[
\TCG(X)\le 2\gcat(X)-1 = 3
\]
by Corollary \ref{catfixed} (1). When $n$ is even we have $3=\TC(X)\le \TCG(X)$, and when $n$ is odd we have $3=\TC(X^G)\le \TCG(X)$ by Corollary \ref{subgroups} (1). We have therefore shown that
\[
\TCG(S^n) = \left\{ \begin{array}{rl} \infty & (n=1)\\
                                        3    & (n\ge 2). \end{array}\right.
\]
 \end{exam}

\begin{exam}
If $X=S^1$ is the circle with $G=S^1$ acting on $X$ by
rotations, then $\TCG(X)=2$.
For the usual motion planner on $S^1$ with two local rules is equivariant
with respect to rotations. So $\TCG(X)\leq 2$, and $\TCG(X)\geq
\TC(X)=2$.
\end{exam}

 Generalizing the previous example, we have the following result which shows that the category of a connected group is a particular instance of its equivariant topological complexity.

\begin{thm} \label{catgrp} Let $G$ be a connected metrizable group acting on itself by left translation. Then $\TCG(G)=\cat(G)$.
\end{thm}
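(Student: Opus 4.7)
The plan is to establish $\TCG(G) = \cat(G)$ by proving two matching inequalities: the lower bound follows from the general results already developed in the paper, while the upper bound comes from an explicit construction that exploits the group structure on $G$.

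For the lower bound, I would apply Proposition \ref{lower}. Since $G$ acts on itself by left translation, the action is free, so the isotropy group $G_e$ of the identity is trivial. The fixed point set $G^H$ is empty (hence vacuously path-connected) for every nontrivial closed subgroup $H$, while $G^{\{e\}} = G$ is path-connected because $G$ is connected and metrizable. Thus $G$ is $G$-connected, and Proposition \ref{lower} with $z = e$ gives $\cat(G) = \cat_{\{e\}}(G) \le \TCG(G)$.

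For the upper bound, I would translate a categorical cover of $G$ into a $G$-sectional categorical cover of $G \times G$. Let $\{U_1, \ldots, U_k\}$ be a categorical cover of $G$ with $k = \cat(G)$. Using path-connectedness of $G$, we may assume that each inclusion $i_{U_i}$ is null-homotopic to the constant map at $e$, via a homotopy $H_i \colon U_i \times I \to G$ with $H_i(u,0) = u$ and $H_i(u,1) = e$. Define
\[
V_i = \{(x,y) \in G \times G \mid x^{-1}y \in U_i\}.
\]
Each $V_i$ is open (as the preimage of $U_i$ under the continuous map $(x,y)\mapsto x^{-1}y$), invariant under the diagonal action since $(gx)^{-1}(gy) = x^{-1}y$, and together the $V_i$ cover $G \times G$. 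Define $G$-equivariant local sections $\sigma_i \colon V_i \to G^I$ of $\pi$ by
\[
\sigma_i(x,y)(t) \;=\; x \cdot H_i\bigl(x^{-1}y,\, 1-t\bigr).
\]
One checks directly that $\sigma_i(x,y)(0) = x$, $\sigma_i(x,y)(1) = y$, and $\sigma_i(gx,gy)(t) = g\cdot \sigma_i(x,y)(t)$, so $\pi\sigma_i = i_{V_i}$ and $\sigma_i$ is $G$-equivariant. By Proposition \ref{secatfibr} this yields $\TCG(G) \le k = \cat(G)$.

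The main subtlety is arranging the null-homotopies $H_i$ to terminate at the common basepoint $e$, which requires path-connectedness of $G$ and is precisely what allows the translation trick $\sigma_i(x,y)(t) = x \cdot H_i(x^{-1}y, 1-t)$ to produce an honest path from $x$ to $y$. Apart from this point, together with the routine verifications of continuity and equivariance, no serious obstacle is anticipated: the argument simply combines the group structure on $G$ with the general inequalities already in hand.
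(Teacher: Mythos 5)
Your proof is correct, and while the lower bound is exactly the paper's (Proposition \ref{lower} with trivial isotropy, noting as the paper implicitly does that $G^H=\emptyset$ for $H\neq\{e\}$ does not obstruct $G$-connectedness), your upper bound takes a genuinely different route. The paper deduces $\TCG(G)\le\cat(G)$ abstractly: $\TCG(G)\le\gcat(G\times G)$ by Proposition \ref{upper}, then $\gcat(G\times G)=\cat\bigl((G\times G)/G\bigr)$ by Proposition \ref{X/G} since the diagonal action is free and the spaces are metrizable, and finally $(G\times G)/G\cong G$ via $[g,h]\mapsto g^{-1}h$. You instead build explicit equivariant motion planners: from a categorical cover $\{U_i\}$ of $G$ with null-homotopies $H_i$ ending at $e$, the invariant sets $V_i=\{(x,y)\mid x^{-1}y\in U_i\}$ and sections $\sigma_i(x,y)(t)=x\cdot H_i(x^{-1}y,1-t)$ give honest $G$-equivariant local sections of $\pi$, so $\TCG(G)\le\cat(G)$ by Proposition \ref{secatfibr}. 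This is in the spirit of the paper's own Proposition \ref{topgroups} (Farber's argument for topological groups), adapted to the left-translation action via the invariant expression $x^{-1}y$. What your construction buys is self-containedness and the fact that metrizability plays no role in the upper bound (in the paper it is needed precisely to invoke Proposition \ref{X/G}); what the paper's route buys is brevity and a conceptual explanation of why the answer is $\cat$ of the orbit space. Both arguments, yours and the paper's, tacitly use that ``connected'' gives path-connectedness of $G$ (needed for your common endpoint $e$, and for the $G$-connectedness hypotheses of Propositions \ref{lower} and \ref{upper} in the paper), so you are no worse off on that point.
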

\begin{proof} Since the diagonal action of $G$ on $G\times G$ is free, we have
\begin{align*}
\cat(G) & \leq \TCG(G) \qquad\mbox{(Proposition \ref{lower})}\\
       & \leq\gcat(G\times G) \qquad\mbox{(Proposition \ref{upper})} \\
       & = \cat\left((G\times G)/G\right) \qquad\mbox{(Proposition \ref{X/G})}\\
       & = \cat(G),
\end{align*}
where at the last step we have made use of the fact that there is a homeomorphism $(G\times G)/G$ to $G$ sending $[g,h]$ to $g^{-1}h$.
\end{proof}

This illustrates the importance of the fixed point set in determining equivariant topological complexity. In particular it shows that $\TCG(X)$ can be arbitrarily large even when $X$ is $G$-contractible, as long as $X$ has no fixed points. By contrast, a $G$-contractible space with fixed points has $\TCG(X)=1$, by Corollary \ref{catfixed} (2).

Next we give an equivariant version of a result of Farber \cite[Lemma 8.2]{Far04} which states that the topological complexity of a connected topological group equals its category.

\begin{prop}\label{topgroups}
Let $X$ be a topological group. Assume that $G$ acts on $X$ by topological group homomorphisms, and that $X$ is $G$-connected. Then $\TCG(X)=\gcat(X)$.
\end{prop}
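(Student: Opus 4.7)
The plan is to mimic Farber's proof (\cite[Lemma 8.2]{Far04}) that $\TC(H)=\cat(H)$ for a connected topological group, working throughout in the equivariant setting. The lower bound $\gcat(X)\le\TCG(X)$ is immediate: since $G$ acts on $X$ by group homomorphisms, the identity element $e\in X$ is fixed by $G$, so $G_e=G$, and Proposition \ref{lower} applied to $z=e$ yields $\gcat(X)=\cat_G(X)\le\TCG(X)$. All the real work is in the upper bound $\TCG(X)\le\gcat(X)$.

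For the upper bound, let $\{U_1,\ldots,U_k\}$ be a $G$-categorical open cover with $k=\gcat(X)$. Each $i_{U_i}$ is $G$-homotopic into some orbit $\cO(x_i)$; since $X$ is $G$-connected and $G_{x_i}\subseteq G=G_e$, Lemma \ref{4} allows us to further deform into $\cO(e)=\{e\}$. Thus we may assume there are $G$-homotopies $F_i\co U_i\times I\to X$ with $F_i(x,0)=x$ and $F_i(x,1)=e$ for all $x\in U_i$. Next consider the ``division'' map
\[
\mu\co X\times X\to X,\qquad \mu(x,y)=x^{-1}y,
\]
with $G$ acting diagonally on $X\times X$. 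Because $G$ acts by group homomorphisms we have $g\cdot(x^{-1}y)=(g\cdot x)^{-1}(g\cdot y)$, so $\mu$ is $G$-equivariant. Set $V_i=\mu^{-1}(U_i)$; these are $G$-invariant open subsets of $X\times X$ and they cover $X\times X$ because $\mu(e,z)=z$. Define
\[
s_i\co V_i\to X^I,\qquad s_i(x,y)(t)=x\cdot F_i(x^{-1}y,\,1-t).
\]
Then $s_i(x,y)(0)=x\cdot e=x$ and $s_i(x,y)(1)=x\cdot(x^{-1}y)=y$, so $\pi s_i=i_{V_i}$ on the nose. Equivariance of $s_i$ uses the homomorphism property once more:
\[
s_i(gx,gy)(t)=(gx)\cdot F_i(g\cdot(x^{-1}y),1-t)=(gx)\cdot\bigl(g\cdot F_i(x^{-1}y,1-t)\bigr)=g\cdot s_i(x,y)(t).
\]
Hence each $V_i$ is $G$-sectional categorical for $\pi$, and $\TCG(X)\le k=\gcat(X)$.

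The only real subtlety, and the step I expect to be the main obstacle, is the bookkeeping showing that both $\mu$ and the sections $s_i$ are $G$-equivariant; these hinge decisively on the hypothesis that $G$ acts by group homomorphisms (so that $g\cdot(xy)=(g\cdot x)(g\cdot y)$ and $g\cdot x^{-1}=(g\cdot x)^{-1}$). Without this, the diagonal action on $X\times X$ need not interact well with multiplication in $X$, and the classical argument collapses. Combining the two inequalities yields $\TCG(X)=\gcat(X)$.
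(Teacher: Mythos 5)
Your proof is correct and follows essentially the same route as the paper: fix the identity as a $G$-fixed point for the lower bound via Proposition \ref{lower}, and for the upper bound use Lemma \ref{4} to contract each categorical set to $\{e\}$, then transport via the group structure to invariant sets $V_i$ with explicit equivariant sections. The only (immaterial) difference is that the paper uses $V_i=\{(a,b)\mid ab^{-1}\in U_i\}$ with section $s_i(a,b)(t)=F(ab^{-1},t)b$, the mirror image of your $x^{-1}y$ construction.
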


\begin{proof}
We first note that the identity element $e\in X$ is a fixed point, since $G$ acts by group homomorphisms. It follows from Proposition \ref{lower} that $\gcat(X)\le \TCG(X)$.

Now suppose that $\gcat(X)=k$. Let $\{ U_1,\ldots , U_k\}$ be a $G$-categorical open cover of $X$. By Lemma \ref{4}, for each $i$ we can find a $G$-homotopy $F\co U_i\times I\to X$ such that $F_0=i_{U_i}$ and $F_1(U_i)\subseteq \cO(e)=\{e\}$.

Set $V_i = \{ (a,b)\mid ab^{-1}\in U_i\}\subseteq X\times X$. Since multiplication and inversion are continuous and $G$-equivariant, the $V_i$ are open and $G$-invariant. Define a $G$-section on $V_i$ by setting $s_i(a,b)(t) = F(ab^{-1},t)b$; this is easily checked as being $G$-equivariant. Hence $\{V_1,\ldots , V_k\}$ forms a $G$-sectional categorical open cover, and $\TCG(X)\le k = \gcat(X)$. This completes the proof.
\end{proof}

\begin{exam} \label{conjugation} Let $G$ be a connected Lie group, acting on itself by conjugation $g\cdot a = gag^{-1}$. Note that $G$ acts by homomorphisms. The fixed point set $G^H$ of a closed subgroup $H\subseteq G$ is the centralizer $C_G(H) = \{ g\in G\mid ga=ag\mbox{ for all } a\in H\}$. Hence $G$ is $G$-connected if and only if the centralizer of every closed subgroup is connected (this holds for example if $G$ is $U(n)$ for $n\geq 1$, or a product of such). In this case, Proposition \ref{topgroups} applies and gives
\[
\TCG(G)=\gcat(G).
\]
We now look at the case $G=U(n)$ in more detail. By results of Farber \cite[Lemma 8.2]{Far04} and Singhof \cite[Theorem 1(b)]{Si}, we have
\[
\TC_{U(n)}(U(n))\ge \TC(U(n)) = \cat(U(n)) = n+1.
\]
On the other hand, Example \ref{U(n)} gives $\TC_{U(n)}(U(n))=\cat_{U(n)}(U(n))\le n+1$. Therefore,
\[
\TC_{U(n)}(U(n)) =\cat_{U(n)}(U(n)) =  n+1.
\]
\end{exam}

Next we give a cohomological lower bound for $\TCG(X)$, using equivariant cohomology theory. We use the following equivariant generalization of \cite[Lemma 18.1]{Far06}, whose proof is routine.

\begin{lemma}
An invariant open subset $U\subseteq
X\times X$ is $G$-sectional categorical with respect to $\pi\co X^I\to X\times X$ if and only if the inclusion $i_U\co U\hookrightarrow X\times X$
is $G$-homotopic to a map with values in the diagonal
$\triangle X\subseteq X\times X$. \qed
\end{lemma}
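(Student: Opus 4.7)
The plan is to prove both directions directly by explicit construction, using the evident $G$-equivariance of the diagonal action on $X\times X$ and the pointwise action on paths in $X^I$.

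For the forward (``only if'') direction, suppose $U$ is $G$-sectional categorical, so there exists a $G$-equivariant map $s\co U\to X^I$ with $\pi s \simeq_G i_U$. I will build an explicit $G$-homotopy $H\co U\times I\to X\times X$ from $\pi s$ to a map into $\triangle X$ by setting
\[
H(u,t) = \bigl( s(u)(t),\, s(u)(1) \bigr).
\]
This is $G$-equivariant because $s$ is $G$-equivariant and the $G$-action on $X\times X$ is diagonal while the action on paths is pointwise. At $t=0$ we recover $\pi s$, and at $t=1$ the image lies entirely in $\triangle X$. Concatenating $H$ with the given $G$-homotopy $\pi s \simeq_G i_U$ yields a $G$-homotopy from $i_U$ to a map into the diagonal.

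For the backward (``if'') direction, the key observation is that $\pi$ admits a canonical $G$-equivariant section over the diagonal, namely the map $\tau\co \triangle X \to X^I$ sending $(x,x)$ to the constant path $c_x$ at $x$. This is $G$-equivariant because $g\cdot c_x = c_{gx}$. Now suppose we are given a $G$-homotopy $K\co U\times I \to X\times X$ with $K_0 = i_U$ and $K_1(U)\subseteq \triangle X$. Define $s\co U\to X^I$ by $s = \tau\circ K_1$. Then $s$ is $G$-equivariant as a composition of $G$-equivariant maps, and $\pi s = K_1 \simeq_G K_0 = i_U$ via $K$, so $U$ is $G$-sectional categorical for $\pi$.

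There is essentially no obstacle beyond verifying equivariance of the standard non-equivariant constructions from \cite[Lemma 18.1]{Far06}, which is why the authors call the proof routine. The only substantive point is that the diagonal $G$-action on $X\times X$ and the pointwise $G$-action on $X^I$ interact with both $\pi$ and the ``constant path'' section in the obvious equivariant way.
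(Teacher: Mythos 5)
Your proof is correct and is exactly the routine argument the paper has in mind (it omits the proof, citing it as the straightforward equivariant analogue of Farber's Lemma 18.1): the evaluation homotopy $H(u,t)=(s(u)(t),s(u)(1))$ for the forward direction and the constant-path section $\tau(x,x)=c_x$ over the diagonal for the converse, with equivariance checked pointwise.
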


Let $EG\to BG$ denote a universal principal $G$-bundle, and $X^h_G=EG\times_G X$ the corresponding homotopy orbit space of $X$. Denote by
$H^*_G(X)=H^*(X^h_G)$ the Borel $G$-equivariant cohomology of $X$, with coefficients in an arbitrary commutative ring. Note that
the diagonal map $\triangle\co X\to X\times X$ is equivariant, and
hence induces a map $\triangle_G\co X^h_G\to (X\times X)^h_G$.

\begin{thm}[Cohomological lower bound] \label{eqcohom}
Suppose that there are cohomology classes $z_1,\ldots ,z_k\in
H^*_G(X\times X)$ such that $0= \triangle_G^*(z_i)\in H^*_G(X)$ for all $i$ and the
product $z_1\cdots z_k$ is non-zero. Then $\TCG(X)>k$.
\end{thm}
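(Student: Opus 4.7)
The plan is to follow the standard Ganea--\v{S}varc cup-length argument, transported to Borel equivariant cohomology via the functorial homotopy-orbit construction $(-)^h_G = EG\times_G(-)$. Suppose for contradiction that $\TCG(X)\le k$, so that $X\times X$ admits a cover by $k$ invariant open sets $U_1,\dots,U_k$ which are $G$-sectional categorical for $\pi$. By the lemma immediately preceding the theorem, for each $j$ the inclusion $i_{U_j}\co U_j\hookrightarrow X\times X$ is $G$-homotopic to a $G$-map $c_j\co U_j\to \triangle X$.

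Next I would push everything through the Borel construction. Applying $(-)^h_G$ is functorial, sends $G$-maps to maps and $G$-homotopies to homotopies, and takes invariant open covers to open covers (since $(U_j)^h_G = EG\times_G U_j$ is open in $(X\times X)^h_G$, and the $(U_j)^h_G$ cover $(X\times X)^h_G$). Hence the inclusion $(U_j)^h_G\hookrightarrow (X\times X)^h_G$ is homotopic to a map that factors through $\triangle_G\co X^h_G\to (X\times X)^h_G$. Since $\triangle_G^*(z_j)=0$, we conclude that $z_j$ restricts to zero in $H^*_G(U_j)=H^*((U_j)^h_G)$.

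From the long exact sequence of the pair $\bigl((X\times X)^h_G,(U_j)^h_G\bigr)$ we obtain a relative class $\bar z_j\in H^*\bigl((X\times X)^h_G,(U_j)^h_G\bigr)$ mapping to $z_j$ under the natural map to $H^*_G(X\times X)$. The standard relative cup product
\[
H^*\bigl((X\times X)^h_G,(U_1)^h_G\bigr)\otimes\cdots\otimes H^*\bigl((X\times X)^h_G,(U_k)^h_G\bigr)\longrightarrow H^*\Bigl((X\times X)^h_G,\textstyle\bigcup_{j=1}^k(U_j)^h_G\Bigr)
\]
then yields a class $\bar z_1\cdots\bar z_k$ in the right-hand group. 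But $\bigcup_j(U_j)^h_G=(X\times X)^h_G$, so this group is zero, whence $\bar z_1\cdots\bar z_k=0$. Mapping this product back into $H^*_G(X\times X)$ gives $z_1\cdots z_k=0$, contradicting the hypothesis. Therefore $\TCG(X)>k$.

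The only subtlety, and the step I would check most carefully, is the compatibility between the $G$-homotopies on $U_j$ and the induced homotopies in the Borel construction: one needs that $(U_j\times I)^h_G\cong (U_j)^h_G\times I$ (trivial $G$-action on $I$) so that a $G$-homotopy $F\co U_j\times I\to X\times X$ really descends to an honest homotopy $(U_j)^h_G\times I\to (X\times X)^h_G$. This is immediate from the definition of $(-)^h_G$ and the triviality of the $G$-action on $I$, but it is the hinge on which the cohomological argument swings.
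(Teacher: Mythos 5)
Your proposal is correct and is essentially the paper's own argument: assume a $G$-sectional categorical cover of size $k$, use the preceding lemma to factor each inclusion through the diagonal up to $G$-homotopy, conclude each $z_j$ restricts to zero in $H^*_G(U_j)$ and hence lifts to a relative class, and kill the product via the relative cup product into $H^*\bigl((X\times X)^h_G,\bigcup_j (U_j)^h_G\bigr)=0$. The paper compresses the relative-class and Borel-functoriality details into the phrase ``by naturality of cup products,'' which you have simply spelled out; the points you flag (openness of $(U_j)^h_G$ and $(U_j\times I)^h_G\cong (U_j)^h_G\times I$) are indeed the right things to check and hold as you say.
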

\begin{proof}
Suppose $\TCG(X)\leq k$, and let $\{U_1,\ldots , U_k\}$ be a $G$-sectional categorical open cover. Since the inclusion $i_{U_i}\co U_i\hookrightarrow X\times X$ factors through $\triangle\co X\to X\times X$ up to $G$-homotopy, it follows that the restriction $(i_{U_i})^*\co H^*_G(X\times X)\to H^*_G(U_i)$ maps $z_i$ to zero, and hence
$z_i$ is in the image of $H^*_G(X\times X,U_i)\to H^*_G(X\times X)$
for each $i=1,\ldots , k$. By naturality of cup products, it then follows that the product $z_1\cdots z_k$ is
zero.
\end{proof}

We do not currently know of any examples of $G$-connected spaces $X$ where the lower bound for $\TCG(X)$ given by Theorem \ref{eqcohom} improves on the non-equivariant lower bound $\nil\ker (\cup)<\TC(X)\le\TCG(X)$ given by the zero-divisors cup-length.

Finally in this section, we prove a result which relates equivariant and non-equivariant topological complexity, and give an example indicating that the former may be useful in estimating the latter.

\begin{thm}\label{fred} Let $X$ be a $G$-space, and let $E\to B=E/G$ be a numerable principal $G$-bundle. Then
$$\TC(X_G)\leq\TCG(X)\TC(B),$$
where $X_G=E\times_G X$ denotes the associated fibre space with fibre $X$.
\end{thm}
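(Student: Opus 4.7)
The plan is as follows. Writing $m=\TC(B)$ and $n=\TCG(X)$, I aim to exhibit an open cover of $X_G\times X_G$ by $mn$ sets, each admitting a continuous section of the free path fibration $\pi\co X_G^I\to X_G\times X_G$. By Proposition \ref{secatfibr} and the ordinary fibration property, I may fix an open cover $\{V_1,\ldots,V_m\}$ of $B\times B$ with genuine sections $t_j\co V_j\to B^I$ of $\pi_B$, and a $G$-invariant open cover $\{U_1,\ldots,U_n\}$ of $X\times X$ with $G$-equivariant sections $s_i\co U_i\to X^I$ of the free path fibration on $X$. The cover to be constructed will be indexed by pairs $(j,i)$.

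The main tool is a $G$-equivariant path lifting function $\Lambda\co E\times_B B^I\to E^I$ for the principal $G$-bundle $E\to B$, which is available because numerable principal $G$-bundles are $G$-fibrations. Denote by $f\co X_G\to B$ the bundle projection, and set $F=f\times f$ and $W_j=F^{-1}(V_j)$. I would then define a ``fibrewise monodromy'' map $\phi_j\co W_j\to(X\times X)/G$ as follows: given $(y_1,y_2)\in W_j$, choose any representative $y_1=[e,x]$, set $e'=\Lambda(e,t_j(F(y_1,y_2)))(1)$ (which lies over $f(y_2)$), let $x'\in X$ be the unique element with $y_2=[e',x']$, and declare $\phi_j(y_1,y_2)=[(x,x')]$. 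The $G$-equivariance of $\Lambda$ makes this independent of the chosen representative, and continuity of $\phi_j$ is verified using local sections of $E\to B$. Since each $U_i$ is $G$-invariant, its image $U_i/G$ is open in $(X\times X)/G$, so the sets $W_{j,i}:=\phi_j^{-1}(U_i/G)$ form an open cover of $X_G\times X_G$ of cardinality $mn$.

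On each $W_{j,i}$ I would construct a section $\tau_{j,i}\co W_{j,i}\to X_G^I$ by concatenation: with $\beta=t_j(F(y_1,y_2))$, the first half is $t\mapsto[\Lambda(e,\beta)(t),x]$, running from $y_1$ to the intermediate point $[e',x]$ in the fibre over $f(y_2)$; the second half is $t\mapsto[e',s_i(x,x')(t)]$, running from $[e',x]$ to $y_2=[e',x']$ within that fibre. The principal obstacle is showing that $\tau_{j,i}$ descends to a well-defined, continuous function on $W_{j,i}$: the $G$-equivariance of $\Lambda$ produces a compatible change in $e'$ under replacing $(e,x)$ by $(eg^{-1},gx)$, while the $G$-equivariance of $s_i$ does the same for the second piece, so that the bracket notation absorbs both modifications, and continuity then follows by working over a trivializing neighbourhood of $E\to B$. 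Since $\pi\circ\tau_{j,i}=i_{W_{j,i}}$ by construction, the existence of this $mn$-fold cover delivers $\TC(X_G)\leq\TCG(X)\cdot\TC(B)$.
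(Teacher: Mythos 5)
Your argument is correct, but it takes a genuinely different route from the paper's. The paper never touches the path fibration of $E\to B$ equivariantly: it deforms each $U_i\subseteq B\times B$ into the diagonal using Farber's Lemma 18.1, lifts that deformation through the (ordinary) fibration $\rho\times\rho\co X_G\times X_G\to B\times B$ to obtain maps $h_i\co V_i\to E\times_G(X\times X)$, and then composes with the fibrewise sections $E\times_G s_j$, producing \emph{homotopy} sections of the auxiliary map $E\times_G(X^I)\to X_G\times X_G$; only standard non-equivariant fibration technology for the associated bundles is needed. You instead build \emph{strict} sections of the free path fibration on $X_G$ by concatenating a horizontal lift of $t_j$ with the image of the fibrewise equivariant planner $s_i$, with the open sets cut out by your monodromy map $\phi_j$ landing in $(X\times X)/G$. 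This is more explicit (it literally produces $mn$ motion planners on $X_G$, and your well-definedness checks via the equivariance of $\Lambda$ and $s_i$, and continuity via local trivializations, are exactly the right ones), but it rests on one extra input that you assert rather than prove: that a numerable principal $G$-bundle admits a $G$-equivariant path-lifting function, i.e.\ is a $G$-fibration. This is true---over a trivializing set $U$ the bundle is $U\times G\to U$, which has the obvious equivariant lifting function, and the patching in Dold's uniformization argument can be carried out equivariantly because the partition of unity lives on the base---but it is not a triviality and should be proved or given a reference (e.g.\ the equivariant Dold theorem in tom Dieck's book); the paper's arrangement of the argument is designed precisely to avoid needing it.
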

\begin{proof}
Suppose that $\TC(B)=k$ by a cover $B\times
B=U_1\cup\cdots\cup U_k$ and $\TCG(X)=\ell$ by a cover $X\times
X=W_1\cup\cdots\cup W_\ell$ by $G$-invariant open sets admitting $G$-sections $s_j\co W_j\to X^I$. Our aim is to cover $X_G\times X_G$ by $k\ell$ open sets on which the map $\Pi\co (X_G)^I\to X_G\times X_G$ admits a homotopy section. We have a strictly commuting diagram
\[
\xymatrix{
   E\times_G(X^I)\ar[r] \ar[d]_{E\times_G\pi} \ar[dr]^p &  (X_G)^I \ar[d]^{\Pi}    \\
 E\times_G(X\times X) \ar@{^{(}->}[r]^-\iota \ar[d] & X_G\times X_G \ar[d]^{\rho\times\rho} \\
 B \ar@{^{(}->}[r]^\triangle & B\times B
}
\]
whose bottom square is a pullback. Here $\iota$ is the map sending $[e,x,x']$ to $([e,x],[e,x'])$ and $\rho\co X_G\to B$ is the fibration sending $[e,x]$ to $[e]$.

To prove the proposition it suffices to cover $X_G\times X_G$ by $k\ell$ open sets on which the map $p$ given by $p[e,\gamma] = ([e,\gamma(0)],[e,\gamma(1)])$ admits a homotopy section.

By Lemma 18.1 of \cite{Far06} each
inclusion $U_i\hookrightarrow B\times B$ is homotopic to a map with
values in the diagonal $\triangle B\subseteq B\times B$. Setting $V_i=(\rho\times \rho)^{-1}(U_i)\subseteq X_G\times X_G$ and applying the homotopy lifting property of the map $\rho\times \rho$, we obtain a homotopy from the inclusion $V_i\hookrightarrow X_G\times X_G$ to a map $H_i\co V_i\to X_G\times X_G$ with values in $E\times_G(X\times X)$, and hence by restricting the range a map $h_i\co V_i\to E\times_G(X\times X)$.

Since the sets $W_j\subseteq X\times X$ form an invariant open cover, the sets $E\times_G W_j\subseteq E\times_G(X\times X)$ form an open cover. The equivariant sections $s_j\co W_j\to X^I$ give rise to sections $\sigma_j:=E\times_G s_j\co E\times_G W_j\to E\times_G (X^I)$ such that $(E\times_G \pi)\sigma_j = i_{E\times_G W_j}\co E\times_G W_j\hookrightarrow E\times_G(X\times X)$.

We now have a cover of $X_G\times X_G$ by the $k\ell$ open sets
\[
\Omega_{ij} = h_i^{-1}(E\times_G W_j),\qquad i=1,\ldots, k,\qquad j=1,\ldots,\ell,
\]
and candidate sections $\zeta_{ij}=\sigma_j\circ h_i|_{\Omega_{ij}}\co \Omega_{ij}\to E\times_G(X^I)$ of the map $p$. These are in fact homotopy sections, since \[
p\zeta_{ij} = p\sigma_j h_i|_{\Omega_{ij}} = \iota(E\times_G \pi)\sigma_j h_i|_{\Omega_{ij}} = \iota i_{E\times_G W_j} h_i|_{\Omega_{ij}}= H_i|_{\Omega_{ij}} \simeq i_{\Omega_{ij}}.
\]
\end{proof}

\begin{exam}[{Compare \cite[Example 5.8]{G}}]
Let $K^{n+1}$ be the `$(n+1)$-dimensional Klein bottle'. This is the mapping torus of the involution $S^n\to S^n$ given by reflection in the last co-ordinate. Note that $K^2$ is the usual Klein bottle.

Letting $E=S^1$ with free $\Z_2$-action given by the antipodal map, we see that  $K^{n+1}=E\times_{\mathbb{Z}_2} S^n$. If $n\ge 2$ we have
$\TC_{\Z_2}(S^n)=3$ by Example \ref{spheres}, and $\TC(B)=\TC(S^1)=2$. Then Theorem \ref{fred} gives $\TC(K)\leq
3\cdot 2=6$. However Theorem \ref{fred} says nothing in the $n=1$ case, since $\TC_{\Z_2}(S^1)=\infty$.
\end{exam}


\begin{thebibliography}{99}

\bibitem{BGRT} \textbf{I.\ Basabe, J.\ Gonz\' alez, Y.\ B.\ Rudyak, D.\ Tamaki} {\em Higher topological complexity and homotopy dimension of configuration spaces on spheres}, preprint. \texttt{http://arxiv.org/abs/1009.1851}

\bibitem{BG} \textbf{I.\ Berstein, T.\ Ganea}
\emph{The category of a map and a cohomology class}, Fund. Math. 50 (1961/1962), 265--279.

\bibitem{Cic} \textbf{G.\ Cicorta\c s} \emph{Categorical sequences and applications}, Studia Univ. Babe\c s-Bolyai Math. 47 (2002), no. 2, 31-–39.

\bibitem{HC} \textbf{H.\ Colman} \emph{Equivariant LS-category for finite group actions}, In ``Lusternik-Schnirelmann category and related topics",
Contemp. Math., 316 (2002), 35-–40.

\bibitem{CLOT} \textbf{O.\ Cornea, G.\ Lupton, J.\ Oprea, D.\ Tanr\' e} \emph{Lusternik-Schnirelmann category}, Mathematical Surveys and Monographs, 103, AMS, Providence, RI, 2003.

\bibitem{tD} \textbf{T.\ tom Dieck} \emph{Transformation groups}, de Gruyter Studies in Mathematics, 8, Berlin, 1987.

\bibitem{F} \textbf{E.\ Fadell} \emph{The equivariant Ljusternik-Schnirelmann method for invariant functionals and relative cohomological index theories}, Topological methods in nonlinear analysis, Sém. Math. Sup., 95  (1985), 41-–70.

\bibitem{Far03} \textbf{M.\ Farber}
\emph{Topological complexity of motion planning}, Discrete Comput. Geom. 29. (2003), 211--221.

\bibitem{Far04}  \textbf{M.\ Farber} \emph{Instabilities of robot motion}, Topology Appl. 140 (2004), no. 2-3, 245–-266.

\bibitem{Far06} \textbf{M.\ Farber}
\emph{Topology of robot motion planning},
In ``Morse Theoretic Methods in Nonlinear Analysis and in Symplectic Topology" (P.\ Biran et al (eds.))  (2006), 185--230.

%\bibitem{FTY} \textbf{M.\ Farber, S.\ Tabachnikov, S.\ Yuzvinsky} \emph{Topological robotics: motion planning in
%  projective spaces},  Int. Math. Res. Not.  (2003),  no. 34,
%  1853--1870.

\bibitem{Fox} \textbf{R.\ H.\ Fox} \emph{On the Lusternik-Schnirelmann category}, Ann. of Math. (2) 42 (1941), 333–-370.

\bibitem{GL} \textbf{J.\ Gonz\' alez, P.\ Landweber} \emph{Symmetric topological complexity of projective and lens spaces}, Algebr. Geom. Topol. 9 (2009), no. 1, 473–-494.

\bibitem{G} \textbf{M.\ Grant} \emph{Topological complexity, fibrations and symmetry}, Topology Appl. 159 (2012), no. 1, 88–-97.

\bibitem{Jam} \textbf{I.\ M.\ James} \emph{On category, in the sense of Lusternik-Schnirelmann}, Topology 17 (1978), no. 4, 331–-348.

\bibitem{Mar}  \textbf{W.\ Marzantowicz} \emph{A $G$-Lusternik-Schnirelman category of space with an action of a compact Lie group}, Topology 28 (1989), no. 4, 403-–412.

\bibitem{Si} \textbf{W.\ Singhof} \emph{On the Lusternik-Schnirelmann category of Lie groups}, Math. Z. 145 (1975), no. 2, 111–-116.

\bibitem{Sma} \textbf{S.\ Smale} \emph{On the topology of algorithms}, I, J. of Complexity, 3 (1987), 81--89.

\bibitem{Spa} \textbf{E.\ H.\ Spanier} \emph{Algebraic Topology}, McGraw-Hill, 1966.

\bibitem{S} \textbf{A.\ S.\ \v{S}varc}
 \emph{The genus of a fiber space},
 Amer.\ Math.\ Soc.\ Transl.  55 (1966), no. 2, 49--140.

\bibitem{V} \textbf{V.\ A.\ Vassiliev} \emph{Cohomology of braid groups and complexity of algorithms}, Functional Analysis
and its Appl., 22(1988), 15--24.


\end{thebibliography}
\end{document}